\title{The plus construction with respect to subrings of the rationals}
\author[G. Carrión Santiago]{Guille {Carri\'on~Santiago}}
\address{Departamento de Matemática Aplicada, Ciencia e Ingeniería de los Materiales y Tecnología Electrónica, Universidad Rey Juan Carlos, 28933 Móstoles, Madrid(Spain)}
\email{guille.carrions@urjc.es}
\author[R. Flores]{Ram\'on Flores}
\address{Departamento de Geometría y Topología, Facultad de Matemáticas, Universidad de \linebreak Sevilla, 41013 Sevilla (Spain)}
\email{ramonjflores@us.es}
\author[J. Scherer]{J\'er\^ome Scherer}
\address{Mathematics, Ecole Polytechnique F\'ed\'erale de Lausanne, EPFL, Switzerland}
\email{jerome.scherer@epfl.ch}
\subjclass[2020]{19D06, 55P60, 20F05.}
\keywords{Plus construction, rational homology, nullification, cellularization, perfect group.}
\newtheorem{theorem}{Theorem}[section]
\newtheorem{lemma}[theorem]{Lemma}
\newtheorem{prop}[theorem]{Proposition}
\theoremstyle{definition}
\newtheorem{definition}[theorem]{Definition}
\newtheorem{example}[theorem]{Example}
\newtheorem{remark}[theorem]{Remark}
\newcommand{\Z}{{\mathbb Z}}
\newcommand{\C}{{\mathcal C}}
\newcommand{\Q}{{\mathbb Q}}
\newcommand{\N}{{\mathbb N}}
\newcommand{\Hom}{{\rm Hom}}
\newcommand{\cell}{{\rm cell}}
\begin{document}
\begin{abstract}
Let $J$ be any subset of the prime numbers. We construct an explicit model of a universal $H\Z[J^{-1}]$-acyclic group. Its presentation complex $\mathcal{M}$ is then a universal $H\Z[J^{-1}]$-acyclic space so that the corresponding nullification functor provides a functorial plus constructions for ordinary homology with $\Z[J^{-1}]$ coefficients. Motivated by classical results about Quillen's plus construction for integral homology, we prove that the $H \Z[J^{-1}]$-acyclization functor and the $\mathcal{M}$-cellularization functor coincide. We also show that the acyclization-plus construction fiber sequence is always a cofiber sequence for simply connected spaces, but almost never so when the plus construction is not simply connected, unlike in the classical case.
\end{abstract}
\maketitle

\section*{Introduction}
As early as in 1969, Kervaire \cite{Kervaire69} associated a smooth homology sphere  with a homotopy sphere. However, the plus construction as such is forever associated with the name of Quillen, as it was the fundamental tool used in the definition of higher algebraic K-theory groups, \cite{MR338129}. The plus construction, in its classical form, modifies a space by attaching $2$- and $3$-cells to kill the maximal perfect group while preserving its integral homology. The $2$-cells remove “non-abelian noise” from the fundamental group and the $3$-cells  ensure that the homological information remains unchanged.\medskip 

Despite the impact of Quillen's work, which quickly underscored the importance of this construction, it soon became evident that the original formulation had two significant limitations: on the one hand, the process of attaching cells to eliminate the perfect subgroup without altering the homology was not functorial; on the other, the construction was limited to ordinary homology with integer coefficients. Both problems were addressed by Bousfield through the development of homological localizations, more precisely via an $f$-localization, see \cite{Bousfield75} and also Farjoun's book \cite{Farjoun1996}.
Given a map $f\colon A \to B$ there is a universal construction, called homotopical localization and written $L_f$, that inverts $f$ up to homotopy, see \cref{Def:f-localization}.
\medskip

In the case of integral homology, it suffices to consider a wedge $E$ of representatives of all homotopy types of countable acyclic CW-complexes, and choose $f\colon E \to \ast$, see \cite[1.E.5]{Farjoun1996}, and \cite[Remark~5.1.1]{MR1318881}. The localization functor $L_f$ obtained in this way is called the $E$-nullification functor, denoted by $P_E$; see \cref{def:nullification}. For the chosen space $E$ it agrees up to homotopy with Quillen's plus construction. This means that the plus construction of a space $X$ can be done by repeatedly killing all maps from $E$ to $X$. Given a space $X$, there is a natural map $X\to P_E X$ that can be thought of as the largest (homotopy) quotient of $X$ which is totally invisible to~$E$.

As already noted in the above cited references, this method is particularly relevant because it allows one for the immediate generalization of the plus construction to any other arbitrary homology theory, as follows: given a homology theory $\mathbf h$, let $E_{\mathbf{h}}$ be a large enough $\mathbf h$-acyclic space (it should kill all $\mathbf h$-acyclic spaces in the sense of \cref{rem:naive}). It is known that such an $E_{\mathbf{h}}$ always exists, and the plus construction with respect to $\mathbf{h}$ is defined as the $E_{\mathbf{h}}$-nullification. The space $E_{\mathbf{h}}$ is referred to as a \emph{generator} of the plus construction.\medskip

The interpretation of the plus construction as a nullification was studied by Tai in \cite{MR1640095}. In the following years, significant efforts were devoted to understanding plus constructions associated with specific homology theories. For instance, Berrick and Casacuberta proved in \cite{Berrick1999} that the plus construction for integral homology is the nullification with respect to the classifying space of a certain explicit universal acyclic group $\mathcal F$, which serves as a generator for the integral case. Building on these ideas, Casacuberta, Rodríguez, and Scevenels \cite{Casacuberta1999} constructed a generator for ordinary mod-$p$ homology.
Later, Mislin and Peschke \cite{Mislin2001} extensively studied the plus constructions for an arbitrary homology theory $\mathbf h$. This perspective highlights the advantages of nullification functors over purely cellular constructions, it provides more direct access to low homotopy groups and enjoys better properties for studying the preservation of fibrations; see \cite{Farjoun1996}, \cite{Bousfield94}, \cite{Bousfield1997}. However, a fundamental question arises from the fact that the generators of the plus construction are not unique up to homotopy, making it both interesting and practical to find explicit models rather than relying solely on universal properties. While integral and mod-$p$ cases admit such descriptions, the group variety-based methods used so far do not adapt well to the rational case, for which no concrete model was currently known.\medskip

The main contribution of our study is the explicit description of a generator for the plus construction with respect to ordinary homology with coefficients in any subring of $\mathbb Q$. We explain here our approach for the rational case, but note, see \cref{rem:naive}, that a first more naive guess is discarded by the Promislow group, the same group Gardam used as a counterexample to the unit conjecture for group rings, \cite{MR4334981}.
\medskip

The construction is carried out in two steps. First, in a group theoretical context, we construct a group $\Gamma$ whose generators are, without going into full technical details, torsion elements up to commutators. That is, we impose relations of the form:
\[
x^{r}[x_1, x_2] \dots [x_{2n-1},x_{2n}]= 1
\]
where $r\in \N$ and $x_1,\dots, x_{2n}$ are other generators that in turn must be also torsion up to commutators, see Definitions~\ref{def:Grn} and \ref{def:Gamma}.
This group serves as a universal object 
for the group-theoretic version of the plus construction by mimicking the topological nullification: given a group $G$, the socle $S_\Gamma G$ is the normal subgroup of $G$ generated by all images of homomorphisms from $\Gamma$, and by repeating this process inductively one kills the $\Gamma$-radical $T_\Gamma G$ of $G$. The $\Gamma$-nullification functor is then defined as $P_\Gamma G = G/T_\Gamma G$.
\bigskip

\noindent
{\bf \cref{prop:group nullification}.}
\emph{
Let $G$ be any group. Then $T_\Gamma G$ coincides with the maximal
normal $H\Q$-perfect subgroup. Moreover, $S_\Gamma G = T_\Gamma G$ so that $P_\Gamma G$ is isomorphic to the quotient $G/S_\Gamma G$.}
\medskip

Second, now in the category of spaces, we verify that the presentation complex $\mathcal M$ of the previously constructed group $\Gamma$ is the desired generator, see \cref{def:universal Moore space}. This leads to our main result.
\medskip

\noindent
{\bf \cref{thm:plus}.}
\emph{The nullification functor $P_{\mathcal M}$ coincides with the $H\Q$-plus construction, that is, for every connected space $X$, the map $q\colon X\to P_{\mathcal M}X$ induces an isomorphism in homology with coefficients in $\Q$, and $\pi_1(q)$ is an epimorphism with kernel the $\Q$-perfect radical.
}\medskip

Moreover, we demonstrate that the space $\mathcal M$ is a Moore space: 
its only reduced homology group is $H_1(\mathcal M;\mathbb Z)\cong \bigoplus_{p, k} \Z/p^k$. Hence, its suspension is a wedge of copies of $M(\Z/p^k,2)$ where $p$ is a prime number and $k\geq 1$. Therefore, we obtain a generator whose homotopical structure is clear and concise.\medskip

The explicit construction of the generator $\mathcal M$ is useful to understand better the class of $H\Q$-acyclic spaces by means of the nullification functor $P_{\mathcal M}$. Clearly, by design, any $H\Q$-acyclic space $X$ is $\mathcal M$-acyclic, i.e., the $\mathcal M$-nullification $P_{\mathcal M} X$ is contractible. There is another class, which is in principle smaller, namely that of $\mathcal M$-cellular spaces, which can be built from $\mathcal M$ by using only wedges, pushouts, and telescopes, in a analogous way to how CW-complexes are constructed from spheres, see \cref{def:cell-acyclic}. 
The best $\mathcal M$-cellular approximation can be carried out functorially via the cellularization functor $\cell_{\mathcal M}$, which serves as a counterpart to the nullification functor: if the map $X \to P_{\mathcal M} X$ is the ``largest quotient'' of $X$ that is invisible to $\mathcal M$, then the map $\cell_{\mathcal M} X \to X$ identifies as the ``largest subobject'' that is entirely visible to $\mathcal M$. \medskip

A delicate problem in general is to understand the difference between the class of $E$-cellular spaces for a given space $E$ and its corresponding $E$-acyclic class (see \cref{def:classes}).
In this context, Chachólski, Parent, and Stanley \cite{Cha04}, as well as the second author and Rodríguez \cite{Flores21}, provide a cellular generator for any acyclic class in the categories of spaces and groups, respectively. It is worth noting that the cellular and acyclic classes generated by torsion Moore spaces differ in general, \cite[Example~4.9]{MR1408539}. In this article, we prove that they agree for the Moore space $\mathcal M$.
\medskip

\noindent
{\bf \cref{thm:acyclic=cellular}.}
\emph{The three following classes coincide: $\mathcal M$-cellular spaces, $\mathcal M$-acyclic spaces, 
and $H\mathbb Q$-acyclic spaces. In particular, the functor $cell_{\mathcal M}$ is $H\mathbb Q$-acyclization.
}\medskip

We conclude this article by highlighting a characteristic feature of Quillen's plus construction that is not shared by the rational one. Recall that the classical acyclization fiber sequence 
\[
AX \rightarrow X \rightarrow X^+
\] 
is always a cofibration, a phenomenon studied in a wider context by Alonso in \cite{Alonso1,Alonso2}, see also \cite{MR1358818}. Even if this is not the case for the rational acyclization fiber sequence $A_{H\Q}X\rightarrow X\rightarrow X^{+H\Q} = P_{\mathcal M} X$ in general, as illustrated in \cref{ex:not a cofibration}, we understand completely the situation as explained in our last result.
\medskip

\noindent
{\bf \cref{thm:cofibration}.}
\emph{
Let $X$ be a connected space. Then the acyclization fiber sequence 
\[
A_{H\mathbb Q} X \rightarrow X \rightarrow X^{+ H\mathbb Q}
\]
is a cofiber sequence if and only if one of the two following conditions holds:
\begin{itemize}
    \item[(a)] The $H \mathbb Q$-plus construction $X^{+  H\mathbb Q}$ is simply connected;
    \item[(b)] the $H\mathbb Q$-acyclization $A_{H\mathbb Q} X$ is $H \mathbb Z$-acyclic. 
\end{itemize}
}\medskip

While most of the literature on plus constructions dates back to the 1990s and early 2000s, recent developments have introduced generalizations of the plus construction \emph{à la Quillen}, involving the attachment of low-dimensional cells for rings not necessarily equal to $\Z$. Ye views plus constructions, in their cell-by-cell form, as one of various ways to realize certain prescribed homomorphisms at the level of fundamental groups while leaving higher homology groups unchanged \cite[Theorem~1.1]{MR3009739}. Later, using a similar approach, Broto, Levi, and Oliver describe a relative plus construction \cite{Broto2021}. In contrast, our approach contributes to the understanding of the plus construction from the perspective of Bousfield localization. Rather than following the cell-attachment paradigm, we interpret the plus construction as a nullification process. This paper is part of a larger project aiming at understanding plus constructions; some of the questions we answer here were originated in \cite{MR4602845}.
\medskip

\noindent
{\bf Acknowledgments.}
We thank Ian Leary for pointing out the relationship of \cref{rem:naive} with the unit conjecture for group rings. We also thank the referee for helpful remarks.

The first author was supported by Universidad de Málaga grant G RYC-2010-05663, Comissionat per Universitats i Recerca de la Generalitat de Catalunya (grant No. 2021-SGR-01015), and MICINN grant PID2020-116481GB-I00, and PID2023-149804NB-I00. 

The second author wishes to thank the Laboratory for Topology and Neuroscience at EPFL for its hospitality during the preparation of this paper, and was partially supported by the MICINN grant PID2020-117971GB-C21.

The third author would also like to thank the Isaac Newton Institute for Mathematical Sciences, Cambridge, for support and hospitality during the program Equivariant Homotopy Theory in Context, where this paper was finished. This work was supported by EPSRC grant no EP/R014604/1.

\section{Preliminaries}
\label{Section:Preliminaries}

In this section, we provide the background needed for the rest of this note. 
Our convention is that ``space" means ``simplicial set" and that all maps are simplicial.

\subsection{Localization, nullification, and plus constructions}
As we deal mainly with localizations and related functors, we start with the definition of $f$-localization, as proposed by Farjoun in \cite[Chapter 1]{Farjoun1996}:

\begin{definition}
\label{Def:f-localization}
Let $X$ be a pointed space and $f\colon A\rightarrow B$ a map. We say that $X$ is $f$-\emph{local} if the map induced by composition $map_*(B,X)\rightarrow map_*(A,X)$ is a weak homotopy equivalence. For every space $X$, the \emph{$f$-localization} of $X$ is an $f$-local space $L_fX$ together with a localization map $\eta\colon X\rightarrow L_fX$ with the property that every map $h\colon X\rightarrow Y$ to an $f$-local space factors up to homotopy through a unique map $g\colon L_fX\rightarrow Y$ as $h \simeq g\circ \eta$.   
\end{definition}

There is a well-defined coaugmented homotopy idempotent endofunctor $L_f$ in the category of spaces that sends every space to its $f$-localization.
For the construction of the $f$-localization and its main properties, the reader is referred to the first chapter of \cite{Farjoun1996}.

Two examples of $f$-localizations are very important to us, namely nullifications and homological localizations.

\begin{definition}
\label{def:nullification}
When $f\colon A\rightarrow \ast$ is the constant map, the $f$-localization will be denoted by $P_A$ and called $A$-\emph{nullification}. 
\end{definition}

The main properties of $A$-nullification were studied
by Bousfield, who called this functor $A$-periodization, because of its importance in $v_1$-periodic homotopy theory, see \cite{Bousfield94}.

\begin{definition}
Given a generalized homology theory $\mathbf{h}$, the \emph{homological localization} with respect to $\mathbf{h}$, denoted by $L_\mathbf{h}$, is the coaugmented idempotent functor in the category of spaces which is characterized by the fact that for every space $X$, the coaugmentation $X\rightarrow L_\mathbf{h}X$ is terminal among all $\mathbf{h}$-homology equivalences $X \to Y$.
\end{definition}

Such a functor was constructed by Bousfield \cite{Bousfield75} as an $f$-localization with respect to a so-called \emph{universal $\mathbf{h}$-equivalence} $f$ (see also \cite[Section 6]{MR1290581}). This map can be chosen to be a wedge of set representatives of homotopy classes of $\mathbf{h}$-equivalences $A\rightarrow B$, where the number of simplices of $A$ and $B$ is not bigger than the cardinality of $\mathbf{h}(\ast)$.

The main objects of study of our paper are the nullification functors associated to homology localization functors.
\begin{definition}
\label{def:plusconstruction}
Given a generalized homology theory $\mathbf{h}$, consider a universal $\mathbf{h}$-equivalence $f$, and its homotopy cofiber $E_{\mathbf{h}}$. Then the nullification functor $P_{E_{\mathbf{h}}}$ is called the \emph{plus construction} with respect to $\mathbf{h}$. The value $P_{E_{\mathbf{h}}}(X)$ is usually written as $X^{+\mathbf{h}}$.
\end{definition}

This definition generalizes the classical Quillen plus construction, which corresponds to the case of ordinary homology with coefficients in the integers. 
In this paper, we will focus on ordinary homology with coefficients in subrings of the rationals. It is worth noting here that the map $X\rightarrow X^{+\mathbf{h}}$ is an $\mathbf{h}$-equivalence, and hence the homological localization factors through it by universality.

Moreover, a generalization of perfectness provides information about the fundamental group of the plus-construction.
The following definitions and results correspond to Mislin and Peschke's more general ones in \cite{Mislin2001}. They study a notion of $\mathbf{h}n$-perfectness while we only focus on the case $n=1$ and thus drop the $1$ from the notation.

For a generalized homology theory $\mathbf{h}$ and a group $G$, denote by $P^{\mathbf{h}}G$ the kernel of the homomorphism $\pi_1 K(G,1)\rightarrow \pi_1K(G,1)^{+\mathbf{h}}$ induced by plus construction. Then we have the following:
\begin{definition}
\label{def:hperfect}
A group $G$ is called $\mathbf{h}$-\emph{perfect} if $P^{\mathbf{h}}G=G$.
\end{definition}

\begin{remark}
\label{rmk:perfect}
    Notice that if $\mathbf{h}$ is an ordinary homology theory with coefficients in a ring $R$ such that $R$ is either of non-zero characteristic, or a subring of the rationals $\Q$, then $P^\mathbf{h}G$ is the maximal $R$-perfect subgroup of $G$. In fact, $\mathbf{h}$-perfect means $R$-perfect in this case, see \cite[Section 3.3]{MR4602845}. In general, for an arbitrary homology theory $\mathbf{h}$, $P^{\mathbf{h}}G$ is the maximal $\mathbf{h}$-perfect subgroup of $G$. 
\end{remark}

For the plus-construction of a general space $X$, Mislin and Peschke proved the following:
\begin{prop}
{\rm \cite[Proposition~2.3 (iii)]{Mislin2001}}\label{prop:hperfect}
For any space $X$, the kernel of the homomorphism ${\pi_1X\rightarrow \pi_1X^{+\mathbf{h}}}$ is \mbox{$\mathbf{h}$-perfect.}
\end{prop}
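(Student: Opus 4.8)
The plan is to pass to the homotopy fiber of the plus construction and then translate the conclusion into group theory. Write $q\colon X\to X^{+\mathbf h}=P_{E_{\mathbf h}}X$, denote by $F$ its homotopy fiber, and set $P=\ker\bigl(\pi_1 X\to \pi_1 X^{+\mathbf h}\bigr)$. Since the generator $E_{\mathbf h}$ is connected, the coaugmentation $q$ of the nullification $P_{E_{\mathbf h}}$ is surjective on fundamental groups (a general property of nullification with respect to a connected space, and the one recorded for the rational case in \cref{thm:plus}). Hence $F$ is connected, and the long exact sequence of the fibration identifies the image of $\pi_1 F\to \pi_1 X$ with $P$. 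In particular $\pi_1 F$ surjects onto $P$, so it suffices to prove that $\pi_1 F$ is $\mathbf h$-perfect and that $\mathbf h$-perfectness passes to quotients.

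The heart of the argument is to show that $F^{+\mathbf h}\simeq \ast$. For this I would use the compatibility of nullification with fibrations over a local base (see Farjoun's book \cite{Farjoun1996}): because the base $X^{+\mathbf h}$ is $E_{\mathbf h}$-local, applying $P_{E_{\mathbf h}}$ to the fibration $F\to X\to X^{+\mathbf h}$ produces a fibration $P_{E_{\mathbf h}}F\to P_{E_{\mathbf h}}X\to X^{+\mathbf h}$. As $P_{E_{\mathbf h}}X=X^{+\mathbf h}$ and, by the universal property of the coaugmentation into a local space, the induced map to the base is the identity, its fiber $F^{+\mathbf h}=P_{E_{\mathbf h}}F$ must be contractible. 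Equivalently, this records the fact that $F$ is $\mathbf h$-acyclic and hence invisible to $E_{\mathbf h}$, which is precisely what upgrades the bare statement that $q$ is an $\mathbf h$-equivalence to the statement that its fiber is $\mathbf h$-acyclic.

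To finish, I would descend to $\pi_1$. Applying the plus construction to the canonical map $F\to K(\pi_1 F,1)$, which is a $\pi_1$-isomorphism, naturality yields a commutative square whose effect on fundamental groups shows that the quotient $\pi_1 F\to \pi_1 F/P^{\mathbf h}(\pi_1 F)$ factors through $\pi_1 F\to \pi_1 F^{+\mathbf h}$; therefore $\ker(\pi_1 F\to \pi_1 F^{+\mathbf h})\subseteq P^{\mathbf h}(\pi_1 F)$. Since $F^{+\mathbf h}\simeq \ast$, this kernel is all of $\pi_1 F$, so $\pi_1 F=P^{\mathbf h}(\pi_1 F)$, i.e. $\pi_1 F$ is $\mathbf h$-perfect. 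Finally $P$ is a quotient of $\pi_1 F$, and $\mathbf h$-perfectness is preserved under quotients, by functoriality of the radical $P^{\mathbf h}$ (or, in the ordinary cases of \cref{rmk:perfect}, by right-exactness of $H_1(-;R)$). Hence $P$ is $\mathbf h$-perfect, as claimed.

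I expect the middle paragraph to be the main obstacle: the manipulations with the long exact sequence and with naturality of the plus construction are routine bookkeeping, but one must carefully justify that nullification commutes with the fibration $F\to X\to X^{+\mathbf h}$. The delicate point is that the base need only be $E_{\mathbf h}$-local for this to hold, despite the possibly nontrivial action of $\pi_1 X^{+\mathbf h}$ on $F$; verifying that the hypotheses of the fibration theorem for $L_f$ are genuinely met in this generality is exactly where the real content lies.
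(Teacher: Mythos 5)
This statement is quoted from Mislin--Peschke \cite[Proposition~2.3(iii)]{Mislin2001}; the paper supplies no proof of its own, so there is nothing internal to compare against. Your argument is correct and is essentially the standard one: reduce to the homotopy fiber $F$ of the nullification map, use that $F$ is connected and that $\mathrm{im}(\pi_1 F\to\pi_1 X)=\ker(\pi_1 X\to\pi_1 X^{+\mathbf h})$, show $\pi_1F$ is $\mathbf h$-perfect from $P_{E_{\mathbf h}}F\simeq\ast$ via the naturality square for $F\to K(\pi_1F,1)$, and finish by noting that $\mathbf h$-perfectness passes to quotients by functoriality of $P^{\mathbf h}$. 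All of these steps check out, including the connectivity of $E_{\mathbf h}$ (a disconnected pointed space retracts onto $S^0$, so a nontrivial $\mathbf h$ has no disconnected acyclic spaces) and hence the surjectivity of $\pi_1(q)$.

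The only place where the phrasing is looser than the logic requires is the middle step: nullification does not literally ``commute with fibrations over a local base,'' and the displayed sequence $P_{E_{\mathbf h}}F\to P_{E_{\mathbf h}}X\to X^{+\mathbf h}$ is not produced by simply applying $P_{E_{\mathbf h}}$ levelwise. The clean route is fiberwise nullification \cite[Theorem~1.F.1]{Farjoun1996} applied to $F\to X\to X^{+\mathbf h}$, which yields a fibration $P_{E_{\mathbf h}}F\to \bar X\to X^{+\mathbf h}$ under $X$; since null spaces are closed under fibrations over a null base, $\bar X$ is $E_{\mathbf h}$-null and $X\to\bar X$ is a $P_{E_{\mathbf h}}$-equivalence, forcing $\bar X\simeq X^{+\mathbf h}$ and hence $P_{E_{\mathbf h}}F\simeq\ast$. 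Equivalently, you may simply quote the standard fact that $\overline P_A X$ is always $A$-acyclic, which the paper itself uses when it treats $\overline P_E$ as a coreflection onto the acyclic class. With that reference made precise, your proof is complete.
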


In particular, if $\mathbf{h}$ is a connective homology theory, it is known that ${\pi_1X^\mathbf{h}=\pi_1X/P^\mathbf{h}\pi_1X}$. This is the case of rational homology, which is the main object of interest for us here. For arbitrary homology theories, it is not known if the latter is true; see \cite[Section 2]{Mislin2001} for details.

Let us finally mention that, in general, the plus construction does not coincide with homological localization. For rational homology, it is clear that $P^{H\Q}(\pi_1(S^1))=1$, hence $(S^1)^{+H\Q}\cong S^1$. However, $L_{H\Q}S^1$ is $K(\Q,1)$. For any subring of the rationals, the following more sophisticated example shows that the difference between the plus construction and homological localization can be important:

\begin{example}
Let $F$ be a free group of rank at least $2$ and $R$ be any subring of $\Q$.
Notice that $F$ does not contain any $HR$-perfect subgroup since it is free (see \cref{rmk:perfect}), so on the one hand, $K(F,1)^{+HR}$ is exactly $K(F,1)$. On the other hand, Ivanov and Mikhailov \cite{MR4601076} show that the $HR$-length of $F$ is at least $\omega+\omega$. 
The homological localization $L^R(F)$ is obtained as a limit of a tower whose stages are the quotients by the transfinite lower $R$-central series, as introduced by Bousfield in \cite{MR447375}. The length of this tower is thus at least $\omega + \omega$ before it stabilizes. In particular, $F$ is not $HR$-local and this implies that $\pi_1 (L_{HR}(K(F,1)))\cong L^R F\not\cong F$. 
\end{example}

\subsection{Cellular and acyclic classes}
\label{subsec:classes}
In the 90's, Chach\'olski \cite{MR1408539} and Farjoun \cite{Farjoun1996} developed the concept of closed classes; two of them are particularly relevant in the study of homotopical localization.

\begin{definition}
\label{def:classes}
A class $\C$ of pointed spaces is called \emph{cellular} if it is closed under weak equivalences and taking pointed homotopy colimits. It is \emph{acyclic} if it is moreover closed under extensions by fibrations. The cellular class generated by a pointed space $E$ is denoted by $\C(E)$ and the corresponding acyclic class is denoted by $\overline{\C(E)}$.
\end{definition}

\begin{definition}
\label{def:cell-acyclic}
Let $E$ and $X$ be pointed spaces. We will say that $X$ is $E$-\emph{cellular}, or that $E$ \emph{builds} $X$, if $X$ belongs to $\C(E)$. Likewise, we say that $X$ is $E$-\emph{acyclic}, or that $E$ \emph{kills} $X$, if $X$ belongs to $\overline{\C(E)}$.
\end{definition}

Given a pointed space $E$, Chach\'olski and Farjoun define a cellularization functor $cell_E$ in the category of spaces. It is idempotent, augmented, and characterized by the following facts: 
\begin{enumerate}
\item[(a)] for every $X$, the space $cell_EX$ is $E$-cellular;
\item[(b)] the augmentation induces an equivalence 
$map_*(E,cell_E X)\simeq map_*(E,X).$
\end{enumerate}
In particular, a space $X$ belongs to $\C(E)$ if and only if $cell_EX\simeq X$. The values of $cell_E$ can be constructed in a very explicit way:

\begin{theorem}[{\cite[Theorem 20.3]{MR1408539}}]
\label{thm:Chacholski}
Let $X \rightarrow X'$ be a map of connected and pointed spaces with $E$-cellular homotopy fiber. Assume that it induces the trivial map on $[E, -]_*$. Then $cell_E X$ coincides with the homotopy fiber of the composite map $X \rightarrow X' \rightarrow P_{\Sigma E} X'$.
\end{theorem}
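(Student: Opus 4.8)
The plan is to identify the space
$F:=\mathrm{hofib}\bigl(X\to X'\to P_{\Sigma E}X'\bigr)$
with $\cell_E X$ by checking the two properties that pin down the cellularization: that $F$ is $E$-cellular, and that the natural map $F\to X$ induces a weak equivalence $\map_*(E,F)\to\map_*(E,X)$. Granting both, the characterization recalled above gives $\cell_E F\simeq F$ (an $E$-cellular space is its own cellularization), while the functor $\cell_E$ sends the $E$-equivalence $F\to X$ to an equivalence $\cell_E F\simeq\cell_E X$ (a formal consequence of property (b)). Combining these yields $F\simeq\cell_E X$.

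I would first dispatch the mapping-space statement, which is the clean half and uses only the nullity of the base together with the triviality hypothesis; the cellularity of $\mathrm{hofib}(g)$ plays no role here. Write $g\colon X\to X'$ and $\eta\colon X'\to P_{\Sigma E}X'$, and apply $\map_*(E,-)$ to the fibration $F\to X\xrightarrow{\eta g}P_{\Sigma E}X'$ to obtain a fibration $\map_*(E,F)\to\map_*(E,X)\to\map_*(E,P_{\Sigma E}X')$. Since $P_{\Sigma E}X'$ is $\Sigma E$-null, $\Omega\,\map_*(E,P_{\Sigma E}X')\simeq\map_*(\Sigma E,P_{\Sigma E}X')\simeq\ast$, so every component of $\map_*(E,P_{\Sigma E}X')$ is weakly contractible. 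By hypothesis $g$ is trivial on $[E,-]_*$, hence so is the composite $[E,X]_*\to[E,P_{\Sigma E}X']_*$; thus all of $\map_*(E,X)$ maps into the weakly contractible basepoint component, and the long exact sequence forces $\map_*(E,F)\to\map_*(E,X)$ to be a weak equivalence.

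The substance lies in showing that $F$ is $E$-cellular. Put $\bar F=\mathrm{hofib}(g)$, which is $E$-cellular by hypothesis, and $K=\mathrm{hofib}(\eta)$. Comparing the homotopy fibers of $g$, $\eta$ and $\eta g$ yields a fibration $\bar F\to F\to K$. The key structural input is that $K$, the homotopy fiber of a $\Sigma E$-nullification, is itself $E$-cellular: this comes from the transfinite construction of $P_{\Sigma E}$ by iteratively coning off copies of $\Sigma E$, where a Ganea-type analysis identifies each successive homotopy fiber as a space built out of $\Sigma E$ (hence out of $E$, since $\Sigma E=\mathrm{hocolim}(\ast\leftarrow E\to\ast)$ is $E$-cellular), and cellular classes are closed under the resulting transfinite homotopy colimit. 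Moreover, since $E$ is connected (as in the applications), $\Sigma E$ is simply connected, so $\eta$ is a $\pi_1$-isomorphism and $K$ is simply connected. From the fibration $\bar F\to F\to K$ with $E$-cellular fiber and simply connected $E$-cellular base one then concludes that $F$ is $E$-cellular.

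I expect this last assertion to be the main obstacle. Cellular classes are exactly the closed classes that need \emph{not} be stable under extensions by fibrations --- this is precisely what separates $\C(E)$ from the acyclic class $\overline{\C(E)}$ --- so the passage from $\bar F$ and $K$ to $F$ cannot invoke a blanket closure principle. What rescues the argument is the simple-connectivity of the base $K$ (equivalently of $\Sigma E$), which controls the $\pi_1$-action of the fibration and keeps the total space inside $\C(E)$. Making this restricted closure precise, together with the identification of the nullification fiber $K$ as $E$-cellular, is the technical core of the proof.
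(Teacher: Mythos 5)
This statement is not proved in the paper at all: it is quoted verbatim from Chach\'olski \cite[Theorem~20.3]{MR1408539}, so there is no internal argument to compare yours with. Judged on its own merits, your proposal gets the overall strategy and the easy half right. Reducing the claim to ``$F$ is $E$-cellular'' plus ``$F\to X$ is an $E$-equivalence'' is exactly the correct frame, and your treatment of the mapping-space half is essentially complete: the triviality of $[E,X]_*\to[E,X']_*$ pushes everything into the homotopically discrete space $\mathrm{map}_*(E,P_{\Sigma E}X')$ (that all of its components are contractible when the target is $\Sigma E$-null is a standard fact, slightly stronger than the loop-space identity you quote, which only covers the component of the constant map).

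The genuine gap is in the cellularity of $F$, which is the actual content of Chach\'olski's theorem, and you have correctly located but not closed it. Your plan rests on two unproved assertions: (a) that $K=\mathrm{Fib}(X'\to P_{\Sigma E}X')$ is $E$-cellular, and (b) that a fibration $\bar F\to F\to K$ with $E$-cellular fiber and simply connected $E$-cellular base has $E$-cellular total space. Assertion (b) is not an available principle: closure under extensions by fibrations is precisely what distinguishes the acyclic class $\overline{\C(E)}$ from $\C(E)$, and simple connectivity of the base does not restore it in general (for torsion Moore spaces the two classes genuinely differ, \cite[Example~4.9]{MR1408539}). Assertion (a) runs into the same wall if one tries to prove it by walking up the nullification tower: the first stage $\mathrm{Fib}(X'\to X'_1)$ is handled by the fiber-of-a-cofibration inequality (the result cited elsewhere in this paper as \cite[Corollary~9.A.10]{Farjoun1996}), but the attaching maps at later stages do not lift back to $X'$, so assembling the stages again requires closure under extensions. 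Chach\'olski's actual proof does not decompose $F$ fiberwise into $\bar F$ and $K$; it analyzes the composite $X\to X'\to P_{\Sigma E}X'$ globally, using the cellular inequality for fibers of cofibrations together with a careful treatment of the transfinite tower, and this is where the hypothesis that $[E,X]_*\to[E,X']_*$ is trivial (not merely that the fiber of $g$ is cellular) is really consumed. As it stands, your argument proves the theorem only modulo its hardest step.
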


This result provides a recognition principle for spaces in the cellular class generated by $E$, and applies in particular to the case where $X'$ is the homotopy cofiber of the evaluation map $\vee E \rightarrow X$, see \cite[Theorem 20.5]{MR1408539}. Here, the wedge is taken over representatives of all homotopy classes of pointed maps $E \rightarrow X$.

In a somewhat dual way, it can be seen that a pointed space $X$ is $E$-acyclic if and only if $P_EX$ is contractible. Hence, if we consider the augmented functor $\overline{P}_E$ that takes every space to the fiber of the $E$-nullification, a space $X$ is $E$-acyclic if and only if $\overline{P}_E X\simeq X$. This implies in particular that the cellular and acyclic classes can be studied through the co-reflections $cell_E$ and $\overline{P}_E$.

\subsection{Acyclicity and plus constructions}
Next, we will review some relations between acyclicity in the previous sense and in the homological sense. Recall that given a generalized homology theory $\mathbf{h}$, a space $X$ is $\mathbf{h}$-acyclic if its homology is trivial. 
\begin{definition}
Let $\mathbf{h}$ be a homology theory and $E$ be an $\mathbf{h}$-acyclic space. We say that $E$ is \emph{universal} (with respect to $\mathbf{h}$) if $E$ kills every $\mathbf{h}$-acyclic space.
\end{definition}

It is a direct consequence of \cite[1.E.5]{Farjoun1996} that the wedge of representatives of homotopy types of (small) $\mathbf{h}$-acyclic spaces as the space $E_{\mathbf{h}}$ in \cref{def:plusconstruction} is universal and that any universal space can be used to obtain the plus construction:
\begin{prop}
Let $\mathbf{h}$ be a homology theory and $E$ be a universal $\mathbf{h}$-acyclic space. Then, for every space $X$, the $E$-nullification 
$X\to P_E X$, coincides with the $\mathbf{h}$-plus construction.
\end{prop}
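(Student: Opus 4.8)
The plan is to reduce everything to the comparison of two nullification functors and to the standard criterion for when they agree. By \cref{def:plusconstruction} the $\mathbf h$-plus construction is $P_{E_\mathbf h}$, where $E_\mathbf h$ is the homotopy cofiber of a universal $\mathbf h$-equivalence $f\colon A\to B$. Since the long exact sequence in reduced $\mathbf h$-homology associated with the cofibration sequence $A\xrightarrow{f} B\to E_\mathbf h$ forces $\tilde{\mathbf h}_*(E_\mathbf h)=0$, the space $E_\mathbf h$ is itself $\mathbf h$-acyclic, and by \cite[1.E.5]{Farjoun1996} it is universal. Hence it suffices to prove the purely homotopical statement that any two universal $\mathbf h$-acyclic spaces $E$ and $E'$ satisfy $P_E\simeq P_{E'}$ naturally, and then to specialize $E'=E_\mathbf h$.

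For this I would use that a nullification functor is completely determined by its class of null (equivalently, local) spaces, together with the criterion that $P_E$ and $P_{E'}$ agree as soon as $P_E E'\simeq\ast$ and $P_{E'} E\simeq\ast$. The nontrivial ingredient rests on the universal property of localization: for any $E$-null space $X$ the coaugmentation $E'\to P_E E'$ induces a weak equivalence $\map_*(P_E E',X)\xrightarrow{\ \simeq\ }\map_*(E',X)$, so that $P_E E'\simeq\ast$ yields $\map_*(E',X)\simeq\ast$, i.e. $X$ is $E'$-null. Running the symmetric argument shows that the $E$-null and $E'$-null spaces coincide, whence the coaugmented idempotent functors $P_E$ and $P_{E'}$ agree up to natural equivalence.

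It then remains to verify the two contractibility conditions, and here universality does exactly the required work. Indeed $E'$ is $\mathbf h$-acyclic and $E$ kills every $\mathbf h$-acyclic space, so $E'$ lies in the acyclic class $\overline{\C(E)}$; as recalled in \cref{subsec:classes}, a pointed space is $E$-acyclic precisely when its $E$-nullification is contractible, hence $P_E E'\simeq\ast$. Exchanging the roles of $E$ and $E'$ gives $P_{E'}E\simeq\ast$, and the criterion of the previous paragraph applies. Taking $E'=E_\mathbf h$ identifies the $E$-nullification $X\to P_E X$ with the $\mathbf h$-plus construction $X\to P_{E_\mathbf h}X$ for every space $X$.

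The argument involves no genuinely hard step; the only point demanding care is the passage between the two meanings of ``kill'', namely the acyclic-class membership $E'\in\overline{\C(E)}$ and the homotopical vanishing $P_E E'\simeq\ast$, which is already recorded in \cref{subsec:classes}. I expect this bookkeeping, together with checking that the resulting identification of $P_E$ with $P_{E_\mathbf h}$ is natural rather than merely objectwise, to be the main thing to get right, while the existence and universality of $E_\mathbf h$ may be imported wholesale from \cite[1.E.5]{Farjoun1996}.
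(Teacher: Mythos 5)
Your argument is correct and takes essentially the route the paper leaves implicit: the paper gives no proof beyond the remark preceding the statement that everything is a direct consequence of Farjoun's 1.E.5, and your reduction to the standard mutual-killing criterion for nullification functors (two spaces determine the same nullification as soon as each one's nullification of the other is contractible), combined with the observation that universality supplies exactly those two contractibility conditions, is the natural way to make that remark precise. The only external input you use, namely the universality of the cofiber $E_{\mathbf h}$ of a universal $\mathbf h$-equivalence, is exactly the fact the paper itself imports from the same reference, so there is no gap.
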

If $E$ is a universal $\mathbf{h}$-acyclic space we use the notation  $A_{\mathbf{h}}$ for the acyclization functor $\overline P_E$, that is, the homotopy fiber of the map $X\to X^{+\mathbf{h}}$.
Now, since the plus construction $q\colon X\to X^{+\mathbf{h}}=P_E X$ is a nullification, it is initial with respect to all morphisms $X\to Y$ with $Y$ an $E$-null space. Here, for practical reasons, we are interested in another universal property related to acyclicity. 

\begin{definition}
Let $\mathbf{h}$ be a homology theory. A map of connected spaces $f\colon X\to Y$ is $\mathbf{h}$-acyclic if its homotopy fiber $\mathrm{Fib}(f)$ is $\mathbf{h}$-acyclic.
\end{definition}
\begin{prop}
Let $\mathbf{h}$ be any homology theory. Then, the plus construction ${q\colon X\to X^{+h}}$ is terminal with respect to all $h$-acyclic maps $X\to Y$.
\end{prop}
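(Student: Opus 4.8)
The plan is to unwind the categorical statement. An object of the relevant category is an $\mathbf{h}$-acyclic map $X\to Y$, a morphism from $X\to Y$ to $X\to Y'$ is a map $Y\to Y'$ under $X$ (commuting up to homotopy), and terminality of $q\colon X\to X^{+\mathbf{h}}=P_E X$ means that every $\mathbf{h}$-acyclic map $f\colon X\to Y$ admits a unique such morphism to $q$, i.e. a unique homotopy class $g\colon Y\to P_E X$ with $g\circ f\simeq q$. First I would record that, because $E$ is a \emph{universal} $\mathbf{h}$-acyclic space, the notions of $\mathbf{h}$-acyclic and $E$-acyclic space coincide: the class of $\mathbf{h}$-acyclic spaces is itself an acyclic class (closed under pointed homotopy colimits and extensions by fibrations, by the Serre spectral sequence and stability of homology under homotopy colimits) containing $E$, so it contains $\overline{\C(E)}$, while universality gives the reverse inclusion. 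In particular an $\mathbf{h}$-acyclic map is exactly a map whose homotopy fiber lies in $\overline{\C(E)}$, and $q$ itself is such a map since its fiber is the acyclization $A_{\mathbf h}X=\overline P_E X$, which is $E$-acyclic as a value of the acyclization functor. Thus $q$ is a legitimate object of the category.

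Next I would reduce the universal property to a single statement about pointed mapping spaces. Both existence and uniqueness of $g$ follow at once if the map
\[
f^*\colon \map_*(Y,P_E X)\longrightarrow \map_*(X,P_E X)
\]
is a weak homotopy equivalence, for then the class $[q]\in\pi_0\,\map_*(X,P_E X)$ has a unique preimage. To analyse $f^*$ I would use the homotopy fiber sequence $F\to X\xrightarrow{f}Y$, where $F:=\mathrm{Fib}(f)$ is $E$-acyclic, together with the fact that $P_E X$ is $E$-null. The key input is that $\map_*(F,P_E X)$ is weakly contractible: by the universal property of nullification (\cref{def:nullification} and \cref{Def:f-localization}), mapping into the $E$-null space $P_E X$ is insensitive to the coaugmentation $F\to P_E F$, so $\map_*(F,P_E X)\simeq\map_*(P_E F,P_E X)\simeq\map_*(\ast,P_E X)\simeq\ast$.

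Finally I would feed this into Zabrodsky's lemma: for a fibration sequence $F\to X\to Y$ of connected spaces with $\map_*(F,Z)$ weakly contractible, the restriction $\map_*(Y,Z)\to\map_*(X,Z)$ is a weak equivalence. Applying it with $Z=P_E X$ yields that $f^*$ is a weak equivalence and completes the proof. The main obstacle is precisely the application of Zabrodsky's lemma in this generality: its classical forms carry hypotheses on the action of $\pi_1(Y)$ on the fiber, so I would either invoke a version valid for an arbitrary connected base (noting that $E$, hence $F$, may be taken connected, so all three spaces are connected) or remove the residual $\pi_1$-obstruction by also using that $\Sigma F$ is $E$-acyclic---being a homotopy pushout of $E$-acyclic spaces---so that $\map_*(\Sigma F,P_E X)\simeq\ast$ as well, which controls the relevant connecting maps. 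Everything else is formal, so once the correct form of Zabrodsky's lemma is in place the terminality of $q$ follows.
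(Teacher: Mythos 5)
Your argument is correct, but it follows a different route from the paper's. The paper's proof is a three-line application of fiberwise nullification \cite[Theorem~1.F.1]{Farjoun1996}: since $P_E$ kills the homotopy fiber of an $\mathbf{h}$-acyclic map $f\colon X\to Y$, the map $f$ is a $P_E$-equivalence, so $P_EX\simeq P_EY$ and the coaugmentation $X\to P_EX$ factors through $f$ via $Y\to P_EY$. You instead prove directly that $f^*\colon \map_*(Y,P_EX)\to\map_*(X,P_EX)$ is a weak equivalence, feeding the contractibility of $\map_*(\mathrm{Fib}(f),P_EX)$ into Zabrodsky's lemma. These are two standard and essentially interchangeable tools (saying that $f$ is a $P_E$-equivalence is precisely saying that $f^*$ is an equivalence on mapping spaces into every $E$-null target), and your preliminary observations --- that universality of $E$ identifies $\mathbf{h}$-acyclic with $E$-acyclic maps, and that $\map_*(F,Z)\simeq\ast$ for $F$ $E$-acyclic and $Z$ $E$-null --- are all sound. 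Your version has the merit of making the uniqueness of the factorization explicit (it is the injectivity of $f^*$ on $\pi_0$), which the paper's proof leaves implicit, at the cost of having to invoke the correct, base-point-free form of Zabrodsky's lemma for an arbitrary fibration over a connected base (no $\pi_1$ hypotheses are in fact needed there; the version in \cite{Farjoun1996} suffices, and your fallback via $\Sigma F$ being $E$-acyclic is an acceptable substitute). The paper's citation of fiberwise localization sidesteps that technical point entirely.
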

\begin{proof}
Consider any $\mathbf{h}$-acyclic map $f\colon X \rightarrow Y$ and apply fiberwise nullification to it, see \cite[Theorem~1.F.1]{Farjoun1996}. Since $P_E$ kills the homotopy fiber, we see that $f$ is a $P_E$-equivalence. This implies that $X \rightarrow P_E X \simeq P_E Y$ factors through $f$.
\end{proof}

\subsection{Nullification in the category of groups}

Since the first step of our approach to constructing explicit models for the rational plus construction is about finding a universal group, it will be handy to use the group theoretical notions analogous to those presented above for spaces. This is inspired by Berrick and Casacuberta's approach in \cite{Berrick1999}. A nice survey about localization of groups can be found in \cite{MR1796125}.

Let us start with the $E$-nullification functor, where $E$ is any group. We wish to kill all maps from $E$ and repeat the process until the resulting quotient is $E$-local.
\begin{definition}
\label{def:socle}
Given groups $E$ and $G$, the $E$-\emph{socle} of $G$ is the normal subgroup $S_E(G)$ of $G$ generated by the images of all homomorphisms $E\rightarrow G$.
\end{definition}

By iterating, possibly transfinitely, this construction, we obtain quotient maps
\[
G \rightarrow G/S_E(G) \rightarrow (G/S_E(G))/S_E(G/S_E(G)) \rightarrow \dots \rightarrow G/T_E(G),
\]
where $T_E(G)$ is defined in the following way and we refer the reader to \cite[Theorem 3.2]{MR1320986} where more details 
can be found.

\begin{definition}
\label{def:radical}
The $E$-\emph{radical} $T_EG$ of $G$ is the normal subgroup of $G$ such that $G/T_EG$ is the largest quotient of $G$ with $\Hom(E, G/T_EG) = 0$. This quotient $G/T_E G =: P_E G$ is the $E$-\emph{nullification} of $G$.
\end{definition}

The concept of closed class also has its immediate counterpart in groups:
The cellular class $\mathcal{C}(E)$ is the smallest class of groups containing $E$, and closed under isomorphisms and colimits. The acyclic class $\overline{\mathcal{C}(E)}$, consisting of the $E$-radical groups, is the smallest one that is moreover closed under group extensions.

\section{The universal group}
\label{Section:The group}
Since the $H\Z[J^{-1}]$-plus construction kills all $H\Z$-acyclic spaces, and also all $p$-torsion spaces for $p\in J$, a first naive guess would be to approximate the rational plus construction by taking nullification with respect to $B\mathcal F$, Berrick and Casacuberta's universal $H\Z$-acyclic space, and add all torsion Moore spaces $M(C_p, 1)$, for all primes $p\in J$. This does not quite do the job we want, for, at the group theoretical level already, we cannot kill all torsion elements up to commutators by using homomorphisms out of $\mathcal F$ and cyclic groups $C_p$, as shown in the following example when $J$ equals the set of all prime numbers.

\begin{example}
\label{rem:naive}
Let $G$ be the \emph{Promislow group}, i.e. the Bieberbach group of
Hirsch rank three appearing in Strojnowski's \cite[Theorem~2.6]{MR1477181}. This group $G$ is defined by a presentation 
\[
\langle x, y \, \mid \, x^{-1} y^2 x = y^{-2}, y^{-1} x^2 y = x^{-2} \rangle,
\]
and serves as a counterexample to the unit conjecture for group rings in Gardam's article \cite{MR4334981}. The subtle properties used by Gardam, related to having unique products or being diffuse, do not seem to be apparently related to the absence of torsion and the structure of the commutator subgroup, which matter to us.

On the one hand the group $G$ is torsion-free; hence, there are no maps from $C_p$ to $G$ for any prime $p$.
On the other hand the subgroup usually called $\Delta$ in the theory of Bieberbach groups is free abelian of rank three, generated by $x^2, y^2, (xy)^2$. The commutator subgroup $[G,G]$,
and then also any perfect subgroup, must therefore be contained in this free abelian group, implying that the maximal perfect subgroup $\mathcal P G$ is trivial. In particular, every homomorphisms from the Berrick-Casacuberta group $\mathcal{F}$ to $G$ is trivial.

The previous items imply that $G$ is local with respect to the free product of $\mathcal F$ and all cyclic groups $C_p$ of prime order~$p$. However the generators $x$ and $y$ are torsion up to commutators; in fact, the abelianization of $G$ is isomorphic to $\mathbb Z/4 \times \mathbb Z/4$.
Hence, $G$ is $H \Q$-perfect.
The same happens with the combinatorial Hantzsche-Wendt groups $G_n$, for $n \geq 2$, introduced by Craig and Linnell in \cite{MR4381283} and studied in \cite{MR4379387} by Popko and Szczepa\'nski. The group $G_2$ is the Promislow group $G$ above and $G_n$ has $n$ generators in general, subject to analogous ``torsion up to commutator'' relations as~$G$.
\end{example}

As hinted at in this example the construction of a universal $H\Z[J^{-1}]$-acyclic group must take into account torsion elements up to commutators. Let $J$ be a set of prime numbers and $\Z[J^{-1}]$ be the corresponding subring of the rational numbers $\Q$. Recall from \cref{def:hperfect} and \cref{prop:hperfect} that  a group $G$ is called $H\Z[J^{-1}]$-perfect if, 
$H_1(G;\Z[J^{-1}])\cong G^{\operatorname{ab}}\otimes \Z[J^{-1}]=0$.
In this section, we construct a group $\Gamma$ such that the nullification $P_\Gamma$ realizes the $\Z[J^{-1}]$-perfect reduction, i.e., $\overline{{\mathcal C}(\Gamma)}$ is the class of all $\Z[J^{-1}]$-perfect groups. Moreover, we will see that this nullification can be made in one step, i.e., the $\Gamma$-socle coincides with the $\Gamma$-radical as defined in \cref{def:socle} and \cref{def:radical}.

We write $\N(J)$ for the subset of $J$-torsion integers in $\N$, i.e., those integers that decompose as products of powers of primes in $J$. Notice that any element of a $\Z[J^{-1}] $-perfect group $G$  is $J$-torsion up to an element of the commutator subgroup of $G$. Let us thus construct such groups that are, loosely speaking, freely generated by one generator verifying the above property and such that all subsequent elements appearing in the commutator relations also verify it. 

More concretely, we start with a generator $x(0)$, which we imagine in the zeroth layer of our construction. If the group we construct is $\Z[J^{-1}]$-perfect, some $J$-power of $x(0)$ must live in the commutator subgroup.
We choose thus a number $r(0) \in \N (J)$ such that $x(0)^{r(0)}$ is a product of commutators. There exists thus
an integer $n(0)$, the number of commutators we need (possibly zero), and elements $x(1, 1), \dots, x(1, 2n(0))$, which we imagine in the first layer of the construction, such that the following relation holds:
\[
x(0)^{r(0)}[x(1, 1), x(1, 2)] \dots [x(1, 2n(0)-1), x(1, 2n(0))] = 1
\label{eq:R0} \tag{$R(0)$}
\]
If present, the new generators $x(1, i)$ must also be of $J$-torsion up to commutators.
Choose $r(1, i)  \in \N(J)$ and $n(1, i) \in \N$ for $1 \leq i \leq 2n(0)$ and introduce new generators $x(2, i, 1),$ $\dots,$ $x(2, i, 2n(1, i))$ such that the following relations hold for \mbox{all~$i$:}
\[
x(1, i)^{r(1, i)}[x(2, i, 1), x(2, i, 2)] \dots [x(2, i, 2n(1, i)-1), x(2, i, 2n(1, i))] = 1
\label{eq:R1} \tag{$R(1, i)$}
\]
We iterate this procedure, which can stop at some point if all generators from the last constructed layer are torsion
(no commutators are needed). In general, we introduce integers
$r(k, i_1, \dots, i_k) \in \N (J)$ for $k\geq 1$, integers $n(k, i_1, \dots, i_k)$ and generators in the $(k+1)$-st layer
$x(k+1, i_1, \dots, i_k, i_{k+1})$ such that:
\[
x(k, i_1, \dots, i_k)^{r(k, i_1, \dots, i_k)}\prod_{i=1}^{n(k, i_1, \dots, i_k)}
[x(k+1, i_1, \dots, i_k, 2i-1), x(k+1, i_1, \dots, i_k, 2i)]=1,
\]
we call this relation $R(k, i_1, \dots, i_k)$. 

\begin{definition}
\label{def:Grn}
For all choices of $J$-torsion integers 
\[
\underline{r} = (r(0), r(1, 1), \dots, r(1, 2n(0)), r(2, 1, 1), \dots )
\]
and integers $\underline{n} = (n(0), n(1, 1), \dots, n(1, n(0)), \dots)$ we define a group by the presentation
\[
G(\underline{r}, \underline{n}) = \langle x(0), x(\underline{n}) \, \mid \, R(0), R(\underline{r}) \rangle
\]
where $x(\underline{n})$ denotes a sequence of generators, as many as 
elements in the sequence $\underline{n}$, and likewise $R(\underline{r})$
denotes a family of relations, as many as elements in $\underline{r}$.
\end{definition}

Let us provide a few examples to make these groups more concrete.

\begin{example}
\label{ex:cyclic}
When $r(0) = p^k$ is a prime power in $J$ and $n(0) = 0$, the sequence stops
and we get the presentation $\langle x(0) \, \mid \, x(0)^{p^k} \rangle$
of the cyclic group $C_{p^k}$ of order~$p^k$.
\end{example}

\begin{example}
\label{ex:BC}
When $\underline{r}$ is the constant sequence made of $1$'s and 
$\underline{n}$ is any sequence indicating that every generator is
a product of a certain number of commutators, we get the presentation of
an $H\Z$-acyclic group as in Berrick and Casacuberta's beautiful 
article \cite[]{Berrick1999}.
\end{example}

By design, the groups $G(\underline{r}, \underline{n})$ have the property 
that all generators are torsion up to an element of the commutator 
subgroup. This implies directly the next lemma.
\begin{lemma}
\label{lem:HJacyclic}
Let $J$ be a set of primes and $\underline{r}$ and $\underline{n}$ be a pair of sequences as in \cref{def:Grn}. Then, the group $G(\underline{r}, \underline{n})$ is $H \Z [ J^{-1} ]$-perfect.
\hfill{\qed}
\end{lemma}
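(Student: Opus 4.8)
The plan is to show that the abelianization of $G(\underline{r}, \underline{n})$ becomes trivial after tensoring with $\Z[J^{-1}]$, which by \cref{rmk:perfect} is exactly the definition of being $H\Z[J^{-1}]$-perfect. Since $H_1(G; \Z[J^{-1}]) \cong G^{\operatorname{ab}} \otimes \Z[J^{-1}]$, it suffices to verify that every generator $x(k, i_1, \dots, i_k)$ maps to a torsion element of $G^{\operatorname{ab}}$ whose order lies in $\N(J)$, since such elements vanish after inverting the primes in $J$.

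First I would pass to the abelianization $G(\underline{r}, \underline{n})^{\operatorname{ab}}$, where all commutators become trivial. Under abelianization, each defining relation $R(k, i_1, \dots, i_k)$ simplifies: the product of commutators collapses to the identity, so the relation reduces to $x(k, i_1, \dots, i_k)^{r(k, i_1, \dots, i_k)} = 1$ in $G^{\operatorname{ab}}$. This is the crucial observation—it says that the image of each generator in the abelianization is annihilated by the $J$-torsion integer $r(k, i_1, \dots, i_k) \in \N(J)$. Hence $G^{\operatorname{ab}}$ is generated by elements each of which is killed by a power of primes in $J$, so $G^{\operatorname{ab}}$ is a $J$-torsion abelian group.

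Finally I would observe that tensoring a $J$-torsion abelian group with $\Z[J^{-1}]$ yields zero: if $m \in \N(J)$ annihilates a generator, then that generator becomes divisible by an invertible integer and is forced to vanish, since $\Z[J^{-1}]$ inverts exactly the primes in $J$. Therefore $G^{\operatorname{ab}} \otimes \Z[J^{-1}] = 0$, which is precisely the statement that $G(\underline{r}, \underline{n})$ is $H\Z[J^{-1}]$-perfect.

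I do not anticipate any serious obstacle here; the result is essentially immediate from the design of the presentation, as the paper itself signals by saying the lemma follows ``directly.'' The only point requiring mild care is bookkeeping: one must confirm that the abelianization is genuinely generated only by the $x$-generators appearing in the presentation and that each such generator does appear as the leading term of exactly one relation of the form above, so that no generator escapes being $J$-torsion in $G^{\operatorname{ab}}$. Once this indexing is checked, the conclusion is formal.
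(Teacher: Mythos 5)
Your proof is correct and follows exactly the argument the paper intends: the lemma is stated with an immediate \qed precisely because, as the paper notes, every generator is by construction $J$-torsion modulo commutators, so the abelianization is $J$-torsion and dies after tensoring with $\Z[J^{-1}]$. Your bookkeeping remark is also accurate — each generator introduced in layer $k$ is the leading term of its own relation $R(k, i_1, \dots, i_k)$, so none escapes.
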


The universal group we introduce next depends on the set $J$, but we drop $J$ from the notation since it will always be clear from the context.

\begin{definition}
\label{def:Gamma}
Let $J$ be a set of primes. We define $\Gamma = \Gamma_J$ as the free product of all groups $G(\underline{r}, \underline{n})$ over every possible choice of sequences $\underline{r}$ and $ \underline{n}$ as in \cref{def:Grn}. 
\end{definition}

It follows that the group $\Gamma$ is a generator for the $H\mathbb Z[J^{-1}]$-perfect reduction. The associated nullification functor kills the desired subgroup of the fundamental group in a single step, as we show now.

\begin{theorem}
\label{prop:group nullification}
Let $G$ be any group. Then $T_\Gamma G$ coincides with the maximal
normal $H\Q$-perfect subgroup. Moreover, $S_\Gamma G = T_\Gamma G$ so that $P_\Gamma G$ is isomorphic to the quotient $G/S_\Gamma G$.
\end{theorem}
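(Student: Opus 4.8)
The plan is to identify the socle $S_\Gamma G$ with the maximal normal $H\Z[J^{-1}]$-perfect subgroup $\mathcal{P}G$ of $G$, and then to check that $G/\mathcal{P}G$ is already $\Gamma$-null, which yields both assertions at once. Such a maximal subgroup $\mathcal{P}G$ exists and is characteristic because $H\Z[J^{-1}]$-perfect groups are closed under quotients and extensions (via the five-term exact sequence), and the subgroup generated by any family of normal $H\Z[J^{-1}]$-perfect subgroups is again of this type; compare \cref{rmk:perfect}. Writing $R=\Z[J^{-1}]$, the one elementary fact used throughout is that an abelian group is annihilated by $-\otimes R$ exactly when it is $J$-torsion, and that the $J$-torsion elements of any abelian group form a subgroup. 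Consequently the subgroup generated by any family of $R$-perfect subgroups is $R$-perfect, since its abelianization is generated by $J$-torsion images.

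First I would establish the inclusion $S_\Gamma G\subseteq\mathcal{P}G$, valid for every group. Because conjugating the image of a homomorphism $\Gamma\to G$ produces the image of another such homomorphism, $S_\Gamma G$ is simply the subgroup generated by all images $\phi(\Gamma)$. Each such image is a quotient of $\Gamma$, hence $R$-perfect: indeed $\Gamma$ is a free product of the groups $G(\underline r,\underline n)$, which are $R$-perfect by \cref{lem:HJacyclic}, and abelianization carries a free product to a direct sum of $J$-torsion groups. By the principle above, $S_\Gamma G$ is then a normal $R$-perfect subgroup, so $S_\Gamma G\subseteq\mathcal{P}G$.

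The key step is the reverse inclusion $\mathcal{P}G\subseteq S_\Gamma G$, and this is where the design of the groups $G(\underline r,\underline n)$ is exploited. Given $g\in\mathcal{P}G$, I would build a homomorphism from a suitable free factor $G(\underline r,\underline n)$ of $\Gamma$ into $G$ sending the bottom generator $x(0)$ to $g$, constructed recursively layer by layer. Since $\mathcal{P}G$ is $R$-perfect, some $J$-power $g^{r(0)}$ lies in $[\mathcal{P}G,\mathcal{P}G]$, so we may write it as a product of $n(0)$ commutators of elements of $\mathcal{P}G$; these elements become the images of the first-layer generators $x(1,i)$. Each of them again lies in $\mathcal{P}G$, hence is $J$-torsion up to a product of commutators of elements of $\mathcal{P}G$, which prescribes the images of the second-layer generators, and so on. This coherent system of choices determines sequences $\underline r,\underline n$ together with a homomorphism $\phi\colon G(\underline r,\underline n)\to G$ satisfying every relation $R(k,i_1,\dots,i_k)$ by construction; extending $\phi$ trivially on the remaining free factors gives $\Gamma\to G$ with $g=\phi(x(0))$ in its image, so $g\in S_\Gamma G$. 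The main obstacle is organizing this (possibly non-terminating) recursion so that it genuinely defines a single homomorphism out of one of the prescribed groups $G(\underline r,\underline n)$; the point is precisely that the presentation was tailored so that the witnesses to $R$-perfectness at every layer fit into one such presentation.

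Combining the two inclusions gives $S_\Gamma G=\mathcal{P}G$. Applying the first inclusion to $H=G/\mathcal{P}G$, together with the standard fact that $\mathcal{P}(G/\mathcal{P}G)$ is trivial (its preimage in $G$ would be a normal extension of an $R$-perfect group by an $R$-perfect group, hence $R$-perfect, contradicting maximality), shows $S_\Gamma H\subseteq\mathcal{P}H=1$, so $G/\mathcal{P}G$ is $\Gamma$-null and therefore $T_\Gamma G\subseteq\mathcal{P}G$. Since always $S_\Gamma G\subseteq T_\Gamma G$, the chain $\mathcal{P}G=S_\Gamma G\subseteq T_\Gamma G\subseteq\mathcal{P}G$ forces all three to coincide, proving $T_\Gamma G=\mathcal{P}G$ and $S_\Gamma G=T_\Gamma G$, whence $P_\Gamma G=G/S_\Gamma G$. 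When $J$ is the set of all primes this is exactly the $H\Q$-perfect statement, the general case being read with $H\Z[J^{-1}]$ in place of $H\Q$.
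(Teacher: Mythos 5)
Your proposal is correct and follows essentially the same route as the paper: images of $\Gamma$ are $H\Z[J^{-1}]$-perfect because each $G(\underline r,\underline n)$ is, and conversely every element of an $H\Z[J^{-1}]$-perfect subgroup is hit by a homomorphism from some $G(\underline r,\underline n)$ built recursively from the ``torsion up to commutators'' witnesses. The paper's own proof is just a very terse version of this two-inclusion argument, so your write-up mainly supplies the details (existence and normality of the maximal perfect subgroup, the layer-by-layer construction, and the socle-equals-radical deduction) that the paper leaves implicit.
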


\begin{proof}
Any $H\Z[J^{-1}]$-perfect subgroup of $G$ consists of $J$-torsion elements up to commutators. These elements are, therefore, all hit by some homomorphism from a $G(\underline{r}, \underline{n})$. Conversely, any quotient of $\Gamma$ shares this property. This proves both claims.
Notice that any conjugate of a torsion element up to commutators is also torsion up to commutators, so the maximal $H\Z[J^{-1}]$-perfect subgroup must be normal.
\end{proof}

\begin{example}
Going back to $G = \langle x, y \, \mid \, x^{-1} y^2 x = y^{-2}, y^{-1} x^2 y = x^{-2} \rangle$, the Promislow group in \cref{rem:naive}, we can choose ${\underline{r} = (4, 2,4, 2, 4, \dots )}$ to be an alternating sequence and $\underline{n} = (1, 1, 1, \dots)$ to be constant. Then, the group $G(\underline{r}, \underline{n})$ admits two homomorphisms to $G$ hitting the generators $x$ and $y$ respectively. The first one $\varphi$ sends $x(0)$ to $x$ and, since $x^4 = [x^2, y^{-1}]$, we continue with $\varphi(x(1, 1)) = x^2$ and $\varphi(x(1, 2)) = y^{-1}$. The square of $x^2$ is again this same commutator and $y^{-4} = [x^{-1}, y^2]$, so we can iterate. The second one does the same starting from $\psi(x(0)) = y$. This implies that $T_\Gamma G=G$ and, therefore, $P_\Gamma G$ is trivial.
\end{example}

\section{A universal \texorpdfstring{$H \Z[J^{-1}]$}{HZ[J-1]}-acyclic space}
\label{Section:The space}
In this section, we show that the presentation complex of the group $\Gamma$ defined in \cref{def:Gamma} is a universal $H\Z[J^{-1}]$-acyclic space which has the homotopy type of a \emph{Moore space} of type $M(\Gamma, 1)$. This means that its fundamental group is isomorphic to $\Gamma$ and its only non-trivial reduced homology group is the first one.

\begin{definition}
\label{def:Moore space}
Let $J$ be a set of primes and  $\underline{r}$ and $\underline{n}$ be sequences as in \cref{def:Grn}. Then, the space $M(\underline{r}, \underline{n})$ is the $2$-dimensional \emph{presentation complex} obtained from a wedge $W$ of as many circles as there are generators $x(\underline{r})$ and by attaching one $2$-cell for each relator $R(\underline{r})$, the attaching map being a representative of the homotopy class $R(\underline{r})$ in the free group~$\pi_1 W$.
\end{definition}

Unlike the argument in \cite{Berrick1999}, we cannot conclude that these spaces are Moore spaces from the fact that their fundamental groups are locally free, as they are not so in general.

\begin{lemma}
\label{lem:Moore space}
Let $J$ be a set of primes and $\underline{r}$ and $\underline{n}$ be a pair of sequences as in \cref{def:Grn}. Then, the
space $M(\underline{r}, \underline{n})$ is an $H \Z[J^{-1}]$-acyclic
Moore space.
\end{lemma}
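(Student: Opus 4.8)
The plan is to compute the integral cellular homology of $M(\underline{r}, \underline{n})$ directly from its presentation complex structure, and then transport the result to $\Z[J^{-1}]$-coefficients by flatness. Since $M(\underline{r}, \underline{n})$ is a two-dimensional CW-complex, its reduced homology vanishes in degrees $\geq 3$, and $\tilde H_0 = 0$ because the space is connected. Thus everything reduces to understanding the cellular boundary map
\[
\partial_2 \colon C_2 \to C_1,
\]
where $C_1$ is the free abelian group on the one-cells (one per generator $x(k, i_1, \dots, i_k)$) and $C_2$ is the free abelian group on the two-cells (one per relator $R(k, i_1, \dots, i_k)$). The map $\partial_1$ vanishes since $W$ is a wedge of circles based at a single point, so $H_1(M; \Z) = \coker \partial_2$ and $H_2(M; \Z) = \Ker \partial_2$.

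The key observation is that $\partial_2$ records each attaching word in the abelianization of $\pi_1 W$, where every commutator dies. I would first note that, by construction, each generator $x(k, i_1, \dots, i_k)$ is the \emph{base} of exactly one relator $R(k, i_1, \dots, i_k)$, yielding a bijection between one-cells and two-cells; moreover a generator occurs in only two places, namely as the based power in its own relator and inside the commutators of its parent's relator, the latter abelianizing trivially. Consequently the matrix of $\partial_2$ in these bases is diagonal, sending the two-cell $R(k, i_1, \dots, i_k)$ to $r(k, i_1, \dots, i_k)$ times the one-cell $x(k, i_1, \dots, i_k)$. Since every $r \in \N(J)$ is a positive integer, $\partial_2$ is injective, whence $H_2(M; \Z) = \Ker \partial_2 = 0$ and $M(\underline{r}, \underline{n})$ is a Moore space with
\[
H_1(M(\underline{r}, \underline{n}); \Z) \cong \coker \partial_2 \cong \bigoplus_{x} \Z/r_x \Z,
\]
the sum running over all generators $x$, with $r_x$ the exponent of its base relation.

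Finally I would deduce $H\Z[J^{-1}]$-acyclicity. Each summand $\Z/r_x \Z$ is $J$-torsion because $r_x \in \N(J)$, so $H_1(M; \Z)$ is a $J$-torsion abelian group. As $\Z[J^{-1}]$ is flat over $\Z$, the universal coefficient theorem carries no $\Tor$ term and gives $H_n(M; \Z[J^{-1}]) \cong H_n(M; \Z) \otimes \Z[J^{-1}]$ for all $n$: the degree-one group vanishes since inverting the primes in $J$ kills $J$-torsion, the degree-two group vanishes since $H_2(M; \Z) = 0$, and higher degrees vanish for dimension reasons. Hence $M(\underline{r}, \underline{n})$ has trivial reduced $\Z[J^{-1}]$-homology. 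The one delicate point, and the reason the locally free argument of \cite{Berrick1999} cannot simply be reused, is the injectivity of $\partial_2$: it rests entirely on checking that each generator contributes to the abelianization of a single relator, so that $\partial_2$ is genuinely diagonal and injectivity is immediate. Once this combinatorial bookkeeping on the layered index trees is in place, the remaining steps are formal.
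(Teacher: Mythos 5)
Your proof is correct and follows essentially the same route as the paper: both compute the cellular chain complex of the presentation complex, observe that the abelianized boundary map $\partial_2$ is diagonal with entry $r(k,i_1,\dots,i_k)$ on the relator--generator pair, conclude $H_2=0$ and that $H_1$ is $J$-torsion, and then pass to $\Z[J^{-1}]$-coefficients. You merely spell out two points the paper leaves implicit, namely the bijection between one-cells and two-cells that makes the matrix genuinely diagonal, and the universal-coefficient step; both are accurate.
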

\begin{proof}
We only need to compute the second homology group of this $2$-dimensional CW-complex since we already know from \cref{lem:HJacyclic} that the fundamental group is 
$H \Z[J^{-1}]$-perfect. In fact, the first homology group is an abelian group generated by the $x(\underline{r})$'s with the relations implying that they are $J$-torsion.

To compute $H_2(M(\underline{r}, \underline{n}); \Z)$ we simply write
down the cellular chain complex. In degree two we have a free abelian group on as many generators as we have relators. The image
under the differential of the generator corresponding to the relator
$R(\underline{r})$ is $r(\underline{r})$ times the generator
corresponding to the generator $x(\underline{r})$. The differential is thus given
by a (possibly infinite-dimensional) matrix that is diagonal, in particular injective. 
This shows that $H_2(M(\underline{r}, \underline{n}); \Z) = 0$, so $M(\underline{r}, \underline{n})$ is a Moore space. A fortiori $H_2(M(\underline{r}, \underline{n}); \Z[J^{-1}]) = 0$ as well, hence it is $H \Z[J^{-1}]$-acyclic.
\end{proof}

\begin{definition}
\label{def:universal Moore space}
Let $J$ be a set of primes. We define $\mathcal M_J$ as the wedge of all Moore spaces $M(\underline{r}, \underline{n})$ over every possible choice of sequences $\underline{r}$ and $ \underline{n}$, see \cref{def:Moore space}. When the set of primes $J$ is clear from context, we omit it from the notation and write $\mathcal M_J$ simply as $\mathcal M$.
\end{definition}
\begin{remark}
\label{rem:Moore}
By construction, $\mathcal M$ is a Moore space with fundamental group $\pi_1 \mathcal M \cong \Gamma$ and $H_1(\mathcal M,\mathbb{Z})$
is a direct sum of multiple copies of cyclic groups $\mathbb{Z}/{p^k}$ where $p$ runs through all the prime numbers in $J$.
\end{remark}

From the previously obtained group theoretical properties we obtain now analogous homotopical ones for the $\mathcal M$-nullification.
First, when we compute $P_{\mathcal M} X$, we get the right fundamental group in one single step (one cofiber sequence), and next, iterating
the procedure of killing maps out of $\mathcal M$ and its suspensions, we
obtain the desired plus construction.

\begin{prop}
\label{prop:effect on pi1}
Let $J$ be a set of primes and $\mathcal M$ be the universal Moore space as in \cref{def:universal Moore space}. Then, for any connected space $X$, there is an isomorphism
\[\pi_1 P_{\mathcal M} X \cong P_\Gamma (\pi_1 X),\]
which is realized by the cofiber
sequence $\bigvee_{[M,X]_{*}} \mathcal M \rightarrow X \rightarrow X'$.
\end{prop}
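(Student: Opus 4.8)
The plan is to read off $\pi_1 P_{\mathcal M} X$ from Farjoun's transfinite construction of the nullification, showing that only the first stage---coning off maps out of $\mathcal M$ itself---can change the fundamental group, and that this single stage already produces the group-theoretic nullification $P_\Gamma(\pi_1 X)$ because the socle and the radical coincide by \cref{prop:group nullification}.

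First I would compute the fundamental group of the cofiber in the statement. Write $W = \bigvee_{[\mathcal M, X]_*} \mathcal M$ and let $X'$ be the homotopy cofiber of the evaluation map $e\colon W \to X$. Since $\mathcal M$ is connected with $\pi_1 \mathcal M \cong \Gamma$ (see \cref{rem:Moore}), van Kampen's theorem gives $\pi_1 X' \cong \pi_1 X / N$, where $N$ is the normal closure of the image of $\pi_1 W \to \pi_1 X$ and $\pi_1 W$ is the free product of copies of $\Gamma$ indexed by $[\mathcal M, X]_*$. The point to check is that every homomorphism $\Gamma \to \pi_1 X$ is realised by some map $\mathcal M \to X$: working one free factor $G(\underline r, \underline n)$ at a time, one first realises the homomorphism on the wedge of circles forming the $1$-skeleton of the presentation complex, and then extends over each $2$-cell since the attaching relator becomes null-homotopic in $X$ (being trivial in $\pi_1 X$). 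Hence $N$ is precisely the normal subgroup generated by the images of all homomorphisms $\Gamma \to \pi_1 X$, that is, the socle $S_\Gamma(\pi_1 X)$ of \cref{def:socle}. Invoking the identity $S_\Gamma = T_\Gamma$ of \cref{prop:group nullification}, we obtain $\pi_1 X' \cong \pi_1 X / T_\Gamma(\pi_1 X) = P_\Gamma(\pi_1 X)$.

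Next I would identify $\pi_1 X'$ with $\pi_1 P_{\mathcal M} X$. Recall that $P_{\mathcal M} X$ is a transfinite homotopy colimit whose successor stages are formed by coning off all maps out of $\mathcal M$ and its suspensions $\Sigma^n \mathcal M$ ($n \geq 0$), see \cite[Chapter~1]{Farjoun1996}. For $n \geq 1$ the space $\Sigma^n \mathcal M$ is simply connected, so coning off such maps leaves $\pi_1$ unchanged by van Kampen; thus only the conings of $\mathcal M$ (the case $n = 0$) can affect the fundamental group, and the first of them replaces $\pi_1 X$ by $\pi_1 X / S_\Gamma(\pi_1 X) = P_\Gamma(\pi_1 X)$ exactly as above. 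Now by \cref{def:radical} the group $P_\Gamma(\pi_1 X)$ satisfies $\Hom(\Gamma, P_\Gamma(\pi_1 X)) = 0$, so at every later stage each map $\mathcal M \to Y$ is trivial on $\pi_1$ and coning it off again does nothing to the fundamental group; the same holds at limit ordinals, where $\pi_1$ is the filtered colimit of the fundamental groups of the earlier stages, all equal to $P_\Gamma(\pi_1 X)$. Therefore $\pi_1 P_{\mathcal M} X \cong P_\Gamma(\pi_1 X) \cong \pi_1 X'$. Moreover, because $W$ is a wedge of copies of $\mathcal M$, the cofibration induces $\map_*(X', Z) \simeq \map_*(X, Z)$ for every $\mathcal M$-null space $Z$, so $X \to X'$ is an $\mathcal M$-equivalence; together with the $\pi_1$ computation this shows that the single cofiber sequence $W \to X \to X'$ realises the claimed isomorphism.

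I expect the main obstacle to be the bookkeeping in the third paragraph: one must organise the nullification tower so as to separate the effect of $\mathcal M$ from that of its suspensions and to rule out any change of $\pi_1$ at limit stages. Everything rests on the one-step identity $S_\Gamma = T_\Gamma$ of \cref{prop:group nullification}; without it the first cofiber sequence would only compute a single socle quotient rather than the full radical quotient, and the assertion that one cofiber sequence suffices would break down. By comparison, the van Kampen computation and the realisability of homomorphisms out of $\Gamma$ are routine.
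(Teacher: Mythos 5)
Your proposal is correct and follows essentially the same route as the paper's proof: realize every homomorphism $\Gamma \to \pi_1 X$ by a map from the $2$-dimensional complex $\mathcal M$, apply van Kampen together with the identity $S_\Gamma = T_\Gamma$ of \cref{prop:group nullification} to get $\pi_1 X' \cong P_\Gamma(\pi_1 X)$, and then observe that the remaining stages of the nullification tower (suspensions of $\mathcal M$, and maps from $\mathcal M$ that are trivial on $\pi_1$ because $\pi_1 X'$ is $\Gamma$-local) cannot change the fundamental group. You merely spell out the tower bookkeeping that the paper leaves implicit.
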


\begin{proof}
Since $\mathcal M$ is a $2$-dimensional CW-complex, any homomorphism $\varphi\colon \Gamma \rightarrow \pi_1 X$ can be realized by a map $f\colon \mathcal M \rightarrow X$. 
Consider then the cofiber sequence
\[
\bigvee \mathcal M \rightarrow X \rightarrow X'\, ,
\]
where the wedge is taken over all homotopy classes of pointed maps $\mathcal M\to X$, and conclude by \cref{prop:group nullification} and the Seifert-van Kampen Theorem that $\pi_1 X' \cong P_\Gamma (\pi_1 X)$. 

Observe next that any
map $\mathcal M \rightarrow X'$ induces the trivial
map on the fundamental group since $\pi_1 X'$ is $\Gamma$-local. 
Therefore, $P_{\mathcal M} X = P_{\mathcal M} X'$ has the same fundamental group as $X'$.
\end{proof}
The previous result gives rise to a description of the $H\Z[J^{-1}]$-plus construction in terms of nullification using the space $\mathcal M$:
\begin{theorem}
\label{thm:plus}
Let $J$ be a set of primes and $\mathcal M$ be the universal Moore space as in \cref{def:universal Moore space}. The nullification functor $P_{\mathcal M}$ coincides with the $H\Z[J^{-1}]$-plus construction, that is, for every connected space $X$, the map $q\colon X\to P_{\mathcal M}X$ induces an isomorphism in homology with coefficients in $\Z[J^{-1}]$, and $\pi_1(q)$ is an epimorphism with kernel the $\Z[J^{-1}]$-perfect radical.
\end{theorem}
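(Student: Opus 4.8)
The plan is to establish the two asserted properties of the map $q\colon X\to P_{\mathcal M}X$ separately, since together they are exactly the characterization of the plus construction for a connective homology theory coming from Mislin and Peschke (\cref{prop:hperfect} and the subsequent discussion). The statement about $\pi_1$ is essentially already in hand: by \cref{prop:effect on pi1} there is an isomorphism $\pi_1 P_{\mathcal M}X\cong P_\Gamma(\pi_1 X)$ realized by the cofiber sequence $\bigvee \mathcal M\to X\to X'$, and by \cref{prop:group nullification} the quotient $P_\Gamma(\pi_1 X)$ is $\pi_1 X$ modulo its maximal normal $H\Z[J^{-1}]$-perfect subgroup, which is precisely the $\Z[J^{-1}]$-perfect radical. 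Hence $\pi_1(q)$ is an epimorphism with the prescribed kernel, and in particular its homotopy fiber is connected.

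The bulk of the work is the homology statement, and I would reduce it to locating the homotopy fiber $F$ of $q$ inside the acyclic class generated by $\mathcal M$. By construction of the $\mathcal M$-nullification (see \cref{subsec:classes}), $F\simeq\overline{P}_{\mathcal M}X$ is $\mathcal M$-acyclic, i.e.\ $F\in\overline{\C(\mathcal M)}$. The key observation is then that the collection of connected pointed $H\Z[J^{-1}]$-acyclic spaces is itself an acyclic class: it is closed under pointed homotopy colimits, since reduced $\Z[J^{-1}]$-homology commutes with wedges, pushouts and telescopes, and it is closed under extensions by fibrations, since for a fibration with $H\Z[J^{-1}]$-acyclic base and fiber the Serre spectral sequence has $E_2$-page $H_p(B;H_q(F;\Z[J^{-1}]))$ which vanishes off the origin. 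Each wedge summand $M(\underline r,\underline n)$ of $\mathcal M$ is $H\Z[J^{-1}]$-acyclic by \cref{lem:Moore space}, so $\mathcal M$ lies in this acyclic class; as $\overline{\C(\mathcal M)}$ is the smallest acyclic class containing $\mathcal M$, we get the inclusion $\overline{\C(\mathcal M)}\subseteq\{H\Z[J^{-1}]\text{-acyclic spaces}\}$. Therefore the fiber $F$ is $H\Z[J^{-1}]$-acyclic.

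With $F$ connected and $\widetilde H_*(F;\Z[J^{-1}])=0$, I would finish by running the Serre spectral sequence of the fibration $F\to X\to P_{\mathcal M}X$ with coefficients in $\Z[J^{-1}]$. The local system $H_q(F;\Z[J^{-1}])$ vanishes for $q>0$ and equals the constant system $\Z[J^{-1}]$ for $q=0$, so the spectral sequence collapses onto the bottom row and $q$ induces an isomorphism $H_*(X;\Z[J^{-1}])\xrightarrow{\ \cong\ }H_*(P_{\mathcal M}X;\Z[J^{-1}])$. Combined with the first paragraph, the map $q$ is an $H\Z[J^{-1}]$-equivalence whose effect on $\pi_1$ is the quotient by the $\Z[J^{-1}]$-perfect radical, which are precisely the defining properties of the $H\Z[J^{-1}]$-plus construction, so $P_{\mathcal M}$ coincides with it.

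The main obstacle, and the step deserving the most care, is the class-containment $\overline{\C(\mathcal M)}\subseteq\{H\Z[J^{-1}]\text{-acyclic}\}$: it is tempting to hope for the reverse inclusion as well (universality of $\mathcal M$), but that stronger statement is genuinely harder and is deferred to \cref{thm:acyclic=cellular}. For the present theorem only the one-sided inclusion is needed, and the point is to verify that the homological acyclic condition is stable under exactly the two closure operations defining an acyclic class. I would also take a moment to confirm that the fibration-extension stability uses nothing beyond the vanishing of the reduced fiber homology, so that no hypothesis on the $\pi_1$-action is required.
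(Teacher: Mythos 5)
Your argument is correct and takes essentially the same route as the paper: the $\pi_1$ statement is read off from \cref{prop:effect on pi1} together with \cref{prop:group nullification}, and the homology isomorphism is deduced from the $H\Z[J^{-1}]$-acyclicity of $\mathcal M$ established in \cref{lem:Moore space}. The paper compresses the homology step into a one-line appeal to acyclicity (implicitly, each stage of the nullification cones off maps from the acyclic spaces $\Sigma^k\mathcal M$ and is therefore an $H\Z[J^{-1}]$-equivalence); your alternative route through the connected $\mathcal M$-acyclic homotopy fiber, the containment $\overline{\C(\mathcal M)}\subseteq\{H\Z[J^{-1}]\text{-acyclic spaces}\}$, and the Serre spectral sequence is an equally valid way to cash this out, and the containment you verify is precisely the one the paper itself invokes later in the proof of \cref{thm:acyclic=cellular}.
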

\begin{proof}
The homology isomorphism follows from the acyclicity of $\mathcal M$ and the effect on the fundamental group is described in \cref{prop:effect on pi1}.
\end{proof}

We conclude this section by highlighting a difference with the classical integral plus construction, which might be evident at this point. The cofiber sequence in \cref{prop:effect on pi1} gives the plus construction in one single step when $J$ is empty, but not so in general. Even though all maps from $\mathcal M$ to $X'$ induce the trivial map on fundamental groups, there might be non-trivial maps from higher suspensions $\Sigma^k \mathcal M \rightarrow X'$ one needs to kill to construct $X^{+H\mathbb Z[J^{-1}]}$. This would be the case for $X = M(\mathbb Z/p, n)$ a Moore space when $n \geq 2$ and $p \in J$.

\section{The \texorpdfstring{$H\mathbb Z[J^{-1}]$}{HZ[J-1]}-acyclization functor}
\label{Section:acyclization}

Our final objective is to identify
the $H\mathbb Z[J^{-1}]$-acyclization with the $\mathcal M$-cellularization. To get a better intuition of the situation, we start with a simple but enlightening case, that of the $2$-sphere, when $J$ consists of all prime numbers.

\begin{example}
\label{ex:sphere}
Let $\mathcal M$ be the space defined in \cref{def:universal Moore space}. When $J$ consists of all prime numbers, the \mbox{$\mathcal M$-nullification} functor is the rational plus construction, $P_{\mathcal M} X = X^{+ H\mathbb Q}$, by \cref{thm:plus}. Recall from Subsection~\ref{subsec:classes} that $\overline{P}_M(S^2)$ is the homotopy fiber of the \mbox{$M$-nullification of $S^2$.}

We use the fact that $M(\Q/\Z, 1)$ is $\mathcal M$-cellular, being a wedge of $M(\Z_{p^\infty}, 1)$'s, which in turn are filtered colimits of $M(\Z/p^n, 1)$'s. Consider the iterated cofiber Puppe sequence
\[
S^1 \rightarrow M(\Q, 1) \rightarrow M(\Q/\Z, 1) \rightarrow S^2 \rightarrow M(\Q, 2)
\]
which exhibits $M(\Q, 2)$ as a candidate for $(S^2)'$ in Chach\'olski's fiber sequence \cref{thm:Chacholski} 
describing the cellularization of $S^2$. In particular, the homotopy fiber of the map
\[
S^2 \rightarrow M(\Q, 2) = (S^2)_{(0)} 
\]
is $\mathcal M$-cellular by \cite[Corollary~9.A.10]{Farjoun1996}: More explicitly, the Moore space $M(\Q/\Z, 1)$ is $\mathcal M$-cellular, and then so is the homotopy fiber of the map to the homotopy cofiber. However, this rationalized sphere, which is a third Postnikov section with only two non-trivial homotopy groups (two copies of $\mathbb Q$), is the homological localization $L_{H \mathbb Q} S^2$, hence also the associated rational plus construction. Altogether, we have shown that the homotopy fiber of the rationalization map $S^2 \rightarrow (S^2)_{(0)}$ is $\mathcal M$-cellular.
It coincides with $\overline{L}_{H \mathbb Q} S^2$ and the $H\mathbb Q$-acyclization of $S^2$, which is therefore in particular the $\mathcal M$-cellularization of $S^2$.
We end this introductory example by observing that this space has two copies of $\mathbb Q/\mathbb Z$ in its first two homotopy groups, while the higher homotopy groups are isomorphic to the torsion part of the homotopy groups of $S^2$. 

Alternatively, one could use Bousfield's computation in \cite[Theorem 7.5]{Bousfield1997} (note the small typo there where the $n$-index is switched to $n+1$) so as to identify these homotopy groups and compare them with those of the cellularization. 
\end{example}

Let us move now to the general case and prove that $\mathcal M$-cellularization always coincides with $H\Z[J^{-1}]$-acyclization. It is a general fact that $\mathcal{M}$-cellular spaces are $\mathcal{M}$-acyclic, but the other inclusion is not a tautology.

\begin{theorem}
\label{thm:acyclic=cellular}
The three following classes coincide: $\mathcal M$-cellular spaces, $\mathcal M$-acyclic spaces, and $H\mathbb Z[J^{-1}]$-acyclic spaces. In particular the functor $cell_{\mathcal M}$ is $H\mathbb Z[J^{-1}]$-acyclization.
\end{theorem}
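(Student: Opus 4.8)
The plan is to establish the two easy inclusions $\C(\mathcal M)\subseteq\overline{\C(\mathcal M)}\subseteq\{H\Z[J^{-1}]\text{-acyclic spaces}\}$ and then the hard reverse inclusion, namely that every $H\Z[J^{-1}]$-acyclic connected space is $\mathcal M$-cellular. The first inclusion is the general fact recalled just before the statement; for the second, one checks that the $H\Z[J^{-1}]$-acyclic spaces form an acyclic class (closed under pointed homotopy colimits by Mayer--Vietoris and filtered colimit arguments, and under extensions by fibrations via the Serre spectral sequence) which contains $\mathcal M$ by \cref{lem:Moore space}; minimality of $\overline{\C(\mathcal M)}$ then gives the inclusion. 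Everything therefore reduces to proving $\{H\Z[J^{-1}]\text{-acyclic}\}\subseteq\C(\mathcal M)$.

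To prove this, let $X$ be connected and $H\Z[J^{-1}]$-acyclic. First I would reduce to a simply connected problem using Chach\'olski's description of the cellularization. Form the homotopy cofiber $C$ of the evaluation map $\bigvee_{[\mathcal M,X]_*}\mathcal M\to X$. By \cref{prop:effect on pi1} and van Kampen, $\pi_1 C\cong P_\Gamma(\pi_1 X)$, which is trivial since $\pi_1 X$ is $\Z[J^{-1}]$-perfect (as $H_1(X;\Z[J^{-1}])=0$), so $C$ is simply connected; and the cofiber long exact sequence in $\Z[J^{-1}]$-homology shows $C$ is again $H\Z[J^{-1}]$-acyclic. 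By \cref{thm:Chacholski}, in the form applying to the evaluation cofiber, we obtain $\mathrm{cell}_{\mathcal M}X\simeq\mathrm{Fib}\bigl(X\to P_{\Sigma\mathcal M}C\bigr)$. Hence it suffices to show that $P_{\Sigma\mathcal M}C\simeq\ast$, for then $\mathrm{cell}_{\mathcal M}X\simeq X$ and $X$ is $\mathcal M$-cellular.

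For the simply connected case I would prove the stronger statement that $C$ is $\Sigma\mathcal M$-cellular. Recall from \cref{rem:Moore} that $\Sigma\mathcal M\simeq\bigvee_{p\in J,\,k\geq 1}M(\Z/p^k,2)$, so each $M(\Z/p^k,2)$ is a retract of $\Sigma\mathcal M$ and hence $\Sigma\mathcal M$-cellular; consequently so is every iterated suspension $M(\Z/p^k,n)=\Sigma^{n-2}M(\Z/p^k,2)$ with $n\geq 2$, and every $M(A,n)$ with $A$ a $J$-torsion abelian group and $n\geq 2$ (writing $A$ as a filtered colimit of finite direct sums of cyclic $p$-groups, $p\in J$). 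Since $C$ is simply connected and $H\Z[J^{-1}]$-acyclic, its reduced integral homology groups are all $J$-torsion. I would then feed this into the Eckmann--Hilton homology decomposition $C\simeq\mathrm{hocolim}_n\,C_n$, where $C_2=M(H_2 C,2)$ and $C_n$ is the mapping cone of a map $M(H_n C,n-1)\to C_{n-1}$. An induction shows each $C_n$ is $\Sigma\mathcal M$-cellular: the base case and each attached Moore space are $\Sigma\mathcal M$-cellular by the previous remark, a mapping cone is a homotopy pushout so cellularity is preserved, and passing to the homotopy colimit keeps us in $\C(\Sigma\mathcal M)$. As $\Sigma\mathcal M$-cellular spaces are $\Sigma\mathcal M$-acyclic, $P_{\Sigma\mathcal M}C\simeq\ast$, completing the reduction.

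With the three classes identified, the assertion $\mathrm{cell}_{\mathcal M}=A_{H\Z[J^{-1}]}$ follows formally: the acyclization $A_{H\Z[J^{-1}]}X=\overline P_{\mathcal M}X$ is always $\mathcal M$-acyclic, hence now $\mathcal M$-cellular, while the composite $\mathrm{cell}_{\mathcal M}X\to X\to P_{\mathcal M}X$ is null (any map from an $\mathcal M$-acyclic space to an $\mathcal M$-null space factors through a point by the universal property of nullification), so $\mathrm{cell}_{\mathcal M}X\to X$ lifts to $\overline P_{\mathcal M}X$; conversely $\overline P_{\mathcal M}X\to X$ lifts through $\mathrm{cell}_{\mathcal M}X$, and the two lifts are mutually inverse over $X$. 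I expect the genuinely non-formal step to be the simply connected case: one must be sure that the homology decomposition is available for arbitrary (possibly infinitely generated, unbounded) simply connected $C$ and that the closure of $\C(\Sigma\mathcal M)$ under pushouts and filtered colimits is applied correctly. The fact that torsion Moore spaces generate different cellular and acyclic classes in general (cf.\ \cite[Example~4.9]{MR1408539}) is exactly what is circumvented here by the abundance of Moore summands in $\Sigma\mathcal M$.
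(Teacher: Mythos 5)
Your proposal is correct and follows essentially the same route as the paper: the two easy inclusions, then the cofiber $C$ of the evaluation map $\bigvee \mathcal M \to X$ fed into Chach\'olski's fibration (\cref{thm:Chacholski}), reducing everything to showing that the simply connected, $H\Z[J^{-1}]$-acyclic (hence $J$-torsion) space $C$ is killed by $\Sigma\mathcal M$, and finally the coreflection argument identifying $\mathrm{cell}_{\mathcal M}$ with acyclization. The only divergence is that where the paper invokes as standard that a simply connected $J$-torsion space is killed by $\Sigma\mathcal M$, you prove the stronger claim that it is $\Sigma\mathcal M$-cellular via the homology decomposition, which is a correct and slightly more detailed way to close that step.
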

\begin{proof}
Any $\mathcal M$-cellular space is $H\Z[J^{-1}]$-acyclic as $\mathcal M$ itself is $H\mathbb Z[J^{-1}]$-acyclic by \cref{lem:Moore space}. Conversely, if $X$ is $H\mathbb Z[J^{-1}]$-acyclic, its fundamental group is $H\mathbb Z[J^{-1}]$-perfect.
Hence, an application of \cref{prop:effect on pi1} 
exhibits then the homotopy cofiber $X'$ of $\vee \mathcal M \rightarrow X$ as a simply-connected and $H\mathbb Z[J^{-1}]$-acyclic space. It is therefore $J$-torsion, and such a space is killed by $\Sigma \mathcal M$, which shows that $X$ is cellular by Chach\'olski's fibration, \cite[Theorem~20.3]{MR1408539}.

The third closed class, that of $\mathcal M$-acyclic spaces $\overline{\mathcal C(\mathcal M)}$, 
contains $\mathcal{M}$-cellular spaces by definition, and is contained in the class of $H\Z[J^{-1}]$-acyclic spaces because this property is preserved under all closure properties (equivalences, pointed homotopy colimits, and extensions by fibrations). 

The equivalence between cellularization and acyclization now follows directly since both functors are coreflections on the same closed class.
\end{proof}

\begin{remark}
\label{rem:pedestrian}
For a more pedestrian argument, closer in spirit to \cref{ex:sphere} of the sphere, we can also use Chach\'olski's fibration in \cref{thm:Chacholski} applied to the acyclization map $A_{H\mathbb Z[J^{-1}]} X \rightarrow X$ for an arbitrary connected space $X$. The source is $H\mathbb Z[J^{-1}]$-acyclic, hence $\mathcal M$-cellular, and it has the property that any map $\mathcal M \rightarrow X$ factors through it. Therefore the cellularization $cell_{\mathcal M} X$ can be computed
as the homotopy fiber of the $\Sigma \mathcal M$-nullification of the homotopy cofiber $X' = \mathrm{Cof}(A_{H\mathbb Z[J^-1]}X \rightarrow X)$.

By Ganea's fiber-cofiber trick, \cite[Theorem~1.1]{MR179791}, we can compute the homotopy fiber of the induced map $X' \rightarrow P_{\mathcal M} X$ and identify it as the join $\Omega P_{\mathcal M} X \ast A_{H\mathbb Z[J^-1]} X$, which is equivalent to a suspension $\Sigma \Omega P_{\mathcal M} X \wedge A_{H\mathbb Z[J^-1]} X$. However, since the nullification of a connected space is connected, the suspension of its loop space is connected, i.e., killed by $S^1$, so that this homotopy fiber is killed by $S^1 \wedge A_{H\mathbb Z[J^-1]} X$, hence by $S^1 \wedge \mathcal M = \Sigma \mathcal M$.

In other words, the map $X' \rightarrow P_{\mathcal M} X$ is a $P_{\Sigma \mathcal M}$-equivalence. This tells us that the \mbox{$\mathcal M$-cellularization} $cell_{\mathcal M} X$, which can be constructed as the homotopy fiber of the composite $X \rightarrow X' \rightarrow P_{\Sigma \mathcal M} X'$, coincides with $\overline{P}_{\mathcal M} X$.
\end{remark}

\section{Is the acyclization fibration a cofiber sequence?}
\label{Section:cofibrations}
In the integral case, the acyclization-plus construction fibration is always a cofibration, and this means that the plus construction $X^+$ can be seen as the homotopy cofiber of the acyclization map $AX \rightarrow X$. This phenomenon has been studied by Alonso in \cite{Alonso1} and \cite{Alonso2}, see also Wenhuai and Zai-si's generalization \cite{MR1358818}, and Raptis' interesting modern point of view in \cite{MR3987558}. For rational homology, this is not the case in general, as we illustrate by an elementary example. 

\begin{example}
\label{ex:not a cofibration}
We work with the rational plus construction. Let us consider the product $X= \mathbb R P^2 \times S^1$. The projective plane is a torsion Moore space, hence $H\mathbb Q$-acyclic, whereas the circle is $\mathcal M$-local: its fundamental group does not contain torsion elements up to commutators and there are no non-trivial maps $\Sigma^k \mathcal M \rightarrow S^1$. Therefore, the acyclization fiber sequence for this space is
\[
A_{H \mathbb Q} (X) = \mathbb R P^2 \rightarrow X \rightarrow S^1 = X^{+H \mathbb Q}
\]
This is not a cofiber sequence because the homotopy cofiber $X/\mathbb R P^2$ is the half-smash product $\mathbb R P^2 \ltimes S^1$. One way to see that this is not a circle is to compute the homotopy fiber of the comparison map $\mathbb R P^2 \ltimes S^1 \rightarrow S^1$, which is the join 
\[
\Omega S^1 * \mathbb RP^2 \simeq \Sigma \mathbb Z \wedge \mathbb RP^2 \simeq \bigvee_{-\infty}^{\infty} \Sigma \mathbb RP^2  
\]
\end{example}

The previous example is probably too simple to understand the general situation, but we chose to highlight the comparison map because it will play a key role in the next results.
\begin{lemma}
\label{lem:comparison map}
Let $X$ be a connected space and $A_{H\mathbb Z[J^{-1}]} X \rightarrow X \rightarrow X^{+  H\mathbb Z[J^{-1}]}$ be the associated acyclization fiber sequence. The homotopy fiber of the comparison map 
\[
X/A_{H\mathbb Z[J^{-1}]} X \rightarrow X^{+  H\mathbb Z[J^{-1}]}
\]
is equivalent to $\Sigma A_{H\mathbb Z[J^{-1}]} X \wedge\Omega X^{+  H\mathbb Z[J^{-1}]}$.
\end{lemma}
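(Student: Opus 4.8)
The plan is to recognize the comparison map as precisely the situation governed by Ganea's fiber-cofiber construction, which has already been invoked in \cref{rem:pedestrian}. Abbreviate $A = A_{H\mathbb Z[J^{-1}]} X$ and $P = X^{+H\mathbb Z[J^{-1}]}$, so that the acyclization fiber sequence reads $A \to X \to P$ and $X/A$ is the homotopy cofiber (mapping cone) of the fiber inclusion $A \to X$. Since the composite $A \to X \to P$ is nullhomotopic, the map $X \to P$ extends over the cone $CA$ to a map $X/A \to P$, and this is exactly the comparison map in the statement; in particular it coincides with the canonical map from the cofiber of the fiber inclusion to the base that appears in Ganea's theorem.

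First I would apply Ganea's theorem, \cite[Theorem~1.1]{MR179791}, in the form already used in \cref{rem:pedestrian}: for any fibration $F \to E \to B$ of connected pointed spaces, the homotopy fiber of the induced map $E/F \to B$ is the join $F \ast \Omega B$. Specializing to $A \to X \to P$ identifies the homotopy fiber of $X/A \to P$ with $A \ast \Omega P$. I would then translate the join into the claimed smash-suspension form using the natural equivalence $Y \ast Z \simeq \Sigma (Y \wedge Z)$ for pointed spaces, together with the freedom to absorb the suspension into either smash factor, which gives $\Sigma(A \wedge \Omega P) \simeq \Sigma A \wedge \Omega P$. Combining the two steps yields $A \ast \Omega P \simeq \Sigma A \wedge \Omega P$, matching the asserted description $\Sigma A_{H\mathbb Z[J^{-1}]} X \wedge \Omega X^{+H\mathbb Z[J^{-1}]}$.

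I do not expect a genuine obstacle here: the computation is the same one already carried out in \cref{rem:pedestrian}, now isolated as a standalone lemma for use in the proof of \cref{thm:cofibration}. The only point that needs care is bookkeeping rather than substance, namely checking that the comparison map in the statement really is the canonical Ganea map (cofiber of the fiber inclusion $\to$ base) and keeping track of the identification of the join with $\Sigma A \wedge \Omega P$ up to the symmetry of the join $A \ast \Omega P \simeq \Omega P \ast A$ and the placement of the suspension. Once these conventions are pinned down, the equivalence is immediate.
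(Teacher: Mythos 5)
Your argument is exactly the one in the paper: an application of Ganea's fiber--cofiber theorem \cite[Theorem~1.1]{MR179791} identifying the homotopy fiber of the comparison map with the join $A_{H\mathbb Z[J^{-1}]} X \ast \Omega X^{+H\mathbb Z[J^{-1}]}$, followed by the standard rewriting of the join as $\Sigma A_{H\mathbb Z[J^{-1}]} X \wedge \Omega X^{+H\mathbb Z[J^{-1}]}$. The proposal is correct and matches the paper's proof.
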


\begin{proof}
This is another application of Ganea's fiber-cofiber trick; the homotopy fiber of the comparison map is the join $A_{H\mathbb Z[J^{-1}]} X * \Omega X^{+  H\mathbb Z[J^{-1}]}$, \cite[Theorem~1.1]{MR179791}.
\end{proof}

We wish to understand when the comparison map is an equivalence, or in other words, when the homotopy fiber we just computed is contractible. There are two very different situations now, depending on the connectivity of the loop space. 

\begin{prop}
\label{prop:it is a cofibration}
Let $X$ be a connected space. If the $H \mathbb Z[J^{-1}]$-plus construction $X^{+  H\mathbb Z[J^{-1}]}$ is simply connected, then the acyclization fiber sequence  $A_{H\mathbb Z[J^{-1}]} X \rightarrow X \rightarrow X^{+  H\mathbb Z[J^{-1}]}$ is also a cofiber sequence. 
\end{prop}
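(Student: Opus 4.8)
The plan is to show that the canonical comparison map
\[
X/A_{H\mathbb Z[J^{-1}]} X \longrightarrow X^{+  H\mathbb Z[J^{-1}]}
\]
is a weak equivalence, which is exactly the assertion that the fiber sequence is a cofiber sequence (a fibration whose fiber is contractible is a weak equivalence). To lighten the notation I write $A = A_{H\mathbb Z[J^{-1}]} X$ and $Y = X^{+H\mathbb Z[J^{-1}]}$. By \cref{lem:comparison map} the homotopy fiber of this map is $\Sigma A \wedge \Omega Y$, so it suffices to prove that $A \wedge \Omega Y$ is contractible. Since $A$ is connected and $\Omega Y$ is connected (the latter because $Y$ is simply connected), the smash $A \wedge \Omega Y$ is simply connected, and by the Hurewicz and Whitehead theorems it is enough to show that its reduced integral homology vanishes.

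The key step is to control the homology of $\Omega Y$. By \cref{thm:plus} we have $Y = P_{\mathcal M} X$, so $Y$ is $\mathcal M$-null. By construction (\cref{def:universal Moore space} together with \cref{ex:cyclic}) the Moore space $M(\mathbb Z/p, 1)$ is a wedge summand of $\mathcal M$ for every $p \in J$, so $Y$ is $M(\mathbb Z/p,1)$-null. For a simply connected space this forces, reading off the homotopy groups of $\mathrm{map}_*(M(\mathbb Z/p,1), Y) \simeq \ast$ through the Puppe sequence of the Moore spaces $M(\mathbb Z/p,n)$, that multiplication by $p$ is an isomorphism on every $\pi_n(Y)$. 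Letting $p$ range over $J$, each $\pi_n(Y)$ is a $\mathbb Z[J^{-1}]$-module. Consequently the connected H-space $\Omega Y$ is nilpotent with homotopy groups $\pi_n(\Omega Y) = \pi_{n+1}(Y)$ that are $\mathbb Z[J^{-1}]$-modules, and therefore, by the localization theory of nilpotent spaces, its reduced integral homology groups are $\mathbb Z[J^{-1}]$-modules as well.

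To finish I would combine this with the acyclicity of $A$. Since $A$ is $H\mathbb Z[J^{-1}]$-acyclic and $\mathbb Z[J^{-1}]$ is flat over $\mathbb Z$, the reduced integral homology $\widetilde H_*(A;\mathbb Z)$ is $J$-torsion. The Künneth theorem then expresses $\widetilde H_*(A \wedge \Omega Y;\mathbb Z)$ through tensor and $\mathrm{Tor}$ products of a $J$-torsion group with a $\mathbb Z[J^{-1}]$-module; both vanish, because a $J$-torsion element is killed after writing any element of the uniquely $J$-divisible factor as the appropriate $J$-power of another. Hence $\widetilde H_*(A \wedge \Omega Y;\mathbb Z) = 0$, so $A \wedge \Omega Y$ is contractible, the fiber $\Sigma A \wedge \Omega Y$ is contractible, and the comparison map is an equivalence.

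The main obstacle is the middle step: the hypothesis that $Y$ is simply connected has to be combined with the fact that $Y$ is an $\mathcal M$-nullification in order to pin down $\Omega Y$. The nullity information is what prevents $J$-torsion from appearing in $H_*(\Omega Y)$; without it the Künneth collapse fails, and indeed \cref{ex:not a cofibration} shows precisely what goes wrong once $Y$ is not simply connected, where $\Omega Y$ is even disconnected. I would also be careful to justify the passage from $\mathbb Z[J^{-1}]$-module homotopy groups to $\mathbb Z[J^{-1}]$-module homology for the nilpotent space $\Omega Y$, invoking the standard localization theory of nilpotent spaces.
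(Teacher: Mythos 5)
Your proof is correct and follows essentially the same route as the paper: both reduce, via \cref{lem:comparison map}, to showing that the smash of the $J$-torsion space $A_{H\mathbb Z[J^{-1}]}X$ with the $J$-divisible space $\Omega X^{+H\mathbb Z[J^{-1}]}$ is contractible. The only difference is that you derive the unique $J$-divisibility of $\pi_*(X^{+H\mathbb Z[J^{-1}]})$ directly from $M(\mathbb Z/p,1)$-nullness and finish with an explicit K\"unneth--Hurewicz argument, where the paper instead cites Bousfield's computation and Alonso's criterion; your version is a self-contained unpacking of the same idea.
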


\begin{proof}
The suspension $\Sigma \mathcal M$ splits as a wedge of $J$-torsion Moore spaces, see \cref{rem:Moore}. We can therefore use Bousfield's computation \cite[Theorem~7.5]{Bousfield1997} and infer that the homotopy groups of the simply connected and $\mathcal M$-local space $X^{+  H\mathbb Z[J^{-1}]}$ are uniquely $J$-divisible (the plus construction coincides in fact with the $H \mathbb Z[J^{-1}]$-homological localization here). Therefore so are the homotopy groups of its loop space, and the homology groups as well.

On the other hand the suspension of $A_{H\mathbb Z[J^{-1}]} X$ is a simply connected and $H\mathbb Z [J^{-1}]$-acyclic space, hence $J$-torsion. We are now in position to apply Alonso's criterion \cite[Corollary~2.10]{Alonso1} and conclude that the acyclization fiber sequence is also a cofiber sequence. This follows from the fact that the smash product in \cref{lem:comparison map} of a $J$-torsion space with a $J$-divisible one is contractible.
\end{proof}

This result applies in particular to any simply connected space $X$, such as the $2$-sphere we looked at in \cref{ex:sphere}. We move on to the other case, when the plus construction is not simply connected.

\begin{prop}
\label{prop:it is not a cofibration}
Let $X$ be a connected space and assume that the $H \mathbb Z[J^{-1}]$-plus construction $X^{+  H\mathbb Z[J^{-1}]}$ is not simply connected. Then the acyclization fiber sequence 
\[
{A_{H\mathbb Z[J^{-1}]} X \rightarrow X \rightarrow X^{+  H\mathbb Z[J^{-1}]}}
\]
is a cofiber sequence if and only if $A_{H\mathbb Z[J^{-1}]} X$ is $H \mathbb Z$-acyclic. 
\end{prop}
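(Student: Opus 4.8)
The plan is to detect whether the comparison map of \cref{lem:comparison map} is an equivalence by probing its homotopy fiber $\Sigma A \wedge \Omega X^+$ with field coefficients, where I abbreviate $A = A_{H\mathbb Z[J^{-1}]}X$ and $X^+ = X^{+H\mathbb Z[J^{-1}]}$. Since $\pi_1 X \to \pi_1 X^+$ is surjective by \cref{thm:plus}, the long exact sequence of the fibration shows that $A$ is connected, so $\Sigma A$ is simply connected; moreover the acyclization sequence is a cofiber sequence exactly when $\Sigma A \wedge \Omega X^+$ is contractible.

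For the implication starting from $H\mathbb Z$-acyclicity of $A$, I would note that $\Sigma A$ is then a simply connected space with vanishing integral homology, hence contractible by Hurewicz and Whitehead; smashing a contractible space with any space is contractible, so the comparison map is an equivalence. This direction uses neither field coefficients nor the hypothesis on $\pi_1 X^+$.

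The converse is the crux. Assume the sequence is a cofibration, so $\Sigma A \wedge \Omega X^+ \simeq *$ and in particular its reduced homology vanishes with all field coefficients. Rational coefficients carry no information here: as $A$ is $H\mathbb Z[J^{-1}]$-acyclic it is a fortiori rationally acyclic, so $\tilde H_*(\Sigma A;\mathbb Q) = 0$ regardless. The hypothesis that $X^+$ is not simply connected is what lets torsion coefficients see $A$. Fix a prime $p \in J$ and use the Künneth isomorphism $\tilde H_*(\Sigma A \wedge \Omega X^+;\mathbb F_p) \cong \tilde H_*(\Sigma A;\mathbb F_p) \otimes_{\mathbb F_p} \tilde H_*(\Omega X^+;\mathbb F_p)$. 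Because $\pi_0 \Omega X^+ = \pi_1 X^+$ is nontrivial, the degree-zero group $\tilde H_0(\Omega X^+;\mathbb F_p)$, which is the augmentation ideal of $\mathbb F_p[\pi_1 X^+]$, is nonzero. Each graded piece of the tensor product must vanish, and pairing every $\tilde H_i(\Sigma A;\mathbb F_p)$ against this nonzero degree-zero class then forces, over a field, $\tilde H_*(\Sigma A;\mathbb F_p) = 0$, that is $\tilde H_*(A;\mathbb F_p) = 0$, for every $p \in J$.

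It then remains to upgrade mod-$p$ vanishing to integral acyclicity, and here the acyclicity of $A$ re-enters: $H\mathbb Z[J^{-1}]$-acyclicity means $\tilde H_*(A;\mathbb Z) \otimes \mathbb Z[J^{-1}] = 0$ by flatness, i.e. the reduced integral homology of $A$ is $J$-torsion in each degree. The universal coefficient theorem converts $\tilde H_*(A;\mathbb F_p) = 0$ into the absence of $p$-torsion in $\tilde H_*(A;\mathbb Z)$ for every $p \in J$, and a $J$-torsion abelian group with no $p$-torsion for any $p \in J$ is trivial by the primary decomposition; hence $A$ is $H\mathbb Z$-acyclic. The main obstacle is exactly this converse: rational homology is blind to the distinction, so the argument must be run $p$-locally for $p \in J$, and the nonsimple-connectivity of $X^+$ is precisely the input that produces a nonzero degree-zero class in $\tilde H_*(\Omega X^+;\mathbb F_p)$ against which the torsion of $A$ can be detected.
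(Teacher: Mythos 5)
Your proof is correct, and the easy direction (contractibility of $\Sigma A_{H\Z[J^{-1}]}X$ when the acyclization is $H\Z$-acyclic) matches the paper's; but your argument for the converse takes a genuinely different route. The paper avoids homology computations entirely: since $\pi_1 X^{+H\Z[J^{-1}]}$ is nontrivial, the loop space $\Omega X^{+H\Z[J^{-1}]}$ has at least two components, so $S^0$ is a pointed retract of it, and therefore $\Sigma A_{H\Z[J^{-1}]}X \simeq \Sigma A_{H\Z[J^{-1}]}X \wedge S^0$ is a retract of the homotopy fiber $\Sigma A_{H\Z[J^{-1}]}X \wedge \Omega X^{+H\Z[J^{-1}]}$ computed in \cref{lem:comparison map}. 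Both implications then collapse to the single observation that the simply connected space $\Sigma A_{H\Z[J^{-1}]}X$ is contractible if and only if $A_{H\Z[J^{-1}]}X$ is $H\Z$-acyclic. You instead run a K\"unneth computation with $\mathbb{F}_p$-coefficients for each $p \in J$, using the nonvanishing of $\widetilde H_0(\Omega X^{+H\Z[J^{-1}]};\mathbb{F}_p)$ to force $\widetilde H_*(A_{H\Z[J^{-1}]}X;\mathbb{F}_p)=0$, and then upgrade to integral acyclicity via the universal coefficient theorem and the fact that $H\Z[J^{-1}]$-acyclicity makes the integral homology $J$-torsion. This is valid and complete; the summand $\widetilde H_i(\Sigma A_{H\Z[J^{-1}]}X;\mathbb{F}_p)\otimes \widetilde H_0(\Omega X^{+H\Z[J^{-1}]};\mathbb{F}_p)$ you isolate is exactly the homological shadow of the paper's retraction. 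What the retract argument buys is brevity and coefficient-independence (it detects non-contractibility of $\Sigma A_{H\Z[J^{-1}]}X$ directly, integrally, in one step); what your argument buys is an explicit explanation of why every prime of $J$ must be tested and how the mod-$p$ vanishing interacts with the a priori $J$-torsion structure of $\widetilde H_*(A_{H\Z[J^{-1}]}X;\Z)$.
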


\begin{proof}
When the plus construction is not simply connected, its fundamental group contains at least two elements, and its loop space consists of at least two connected components. The $0$-sphere $S^0$ is thus a retract of $\Omega X^{+  H\mathbb Z[J^{-1}]}$ and so, the smash product ${\Sigma A_{H\mathbb Z[J^{-1}]} X \wedge S^0 \simeq \Sigma A_{H\mathbb Z[J^{-1}]} X}$ is a retract of $\Sigma A_{H\mathbb Z[J^{-1}]} X \wedge\Omega X^{+  H\mathbb Z[J^{-1}]}$, the homotopy fiber of the comparison map we computed in \cref{lem:comparison map}.

We conclude now by analyzing the two possible cases. If $A_{H\mathbb Z[J^{-1}]} X$ is not $H \mathbb Z$-acyclic, its suspension is not contractible, which implies that the homotopy fiber of the comparison map is not contractible either by the previous computation. In this case, the acyclization fiber sequence cannot be a cofiber sequence. In the second case, when $A_{H\mathbb Z[J^{-1}]} X$ is $H \mathbb Z$-acyclic, its suspension is contractible, hence so is the homotopy fiber of the comparison map.
\end{proof}

These two propositions provide a complete answer to the question of when the acyclization fiber sequence is also a cofiber sequence.

\begin{theorem}
\label{thm:cofibration}
Let $X$ be a connected space. Then the acyclization fiber sequence 
\[
A_{H\mathbb Z[J^{-1}]} X \rightarrow X \rightarrow X^{+ H\mathbb Z[J^{-1}]}
\]
is a cofiber sequence if and only if one of the two following conditions holds:
\begin{itemize}
    \item[(a)] The $H \mathbb Z[J^{-1}]$-plus construction $X^{+  H\mathbb Z[J^{-1}]}$ is simply connected;
    \item[(b)] the $H\mathbb Z[J^{-1}]$-acyclization $A_{H\mathbb Z[J^{-1}]} X$ is $H \mathbb Z$-acyclic. 
\end{itemize}
\end{theorem}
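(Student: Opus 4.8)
The plan is to assemble Theorem~\ref{thm:cofibration} directly from the two preceding propositions, treating them as a case split on the connectivity of the plus construction $X^{+H\mathbb Z[J^{-1}]}$. Since every connected space falls into exactly one of the two regimes --- either $X^{+H\mathbb Z[J^{-1}]}$ is simply connected, or it is not --- I would organize the argument as a clean dichotomy, checking that the biconditional of the theorem is equivalent to the disjunction of conditions (a) and (b) in each regime.

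First, I would dispose of the simply connected case. When $X^{+H\mathbb Z[J^{-1}]}$ is simply connected, condition (a) holds, so the right-hand side of the claimed equivalence is satisfied; on the other hand, \cref{prop:it is a cofibration} tells us that the acyclization fiber sequence is automatically a cofiber sequence in this case. Thus both sides of the biconditional are true, and there is nothing further to verify here --- condition (b) is simply irrelevant in this regime.

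Second, I would handle the case where $X^{+H\mathbb Z[J^{-1}]}$ is not simply connected. Here condition (a) fails, so the disjunction ``(a) or (b)'' reduces to condition (b) alone. But this is exactly the content of \cref{prop:it is not a cofibration}: under the non-simply-connected hypothesis, the acyclization fiber sequence is a cofiber sequence \emph{if and only if} $A_{H\mathbb Z[J^{-1}]} X$ is $H\mathbb Z$-acyclic, i.e.\ if and only if (b) holds. So in this regime the biconditional of the theorem coincides verbatim with that of \cref{prop:it is not a cofibration}.

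Combining the two cases gives the full equivalence, and I do not anticipate any genuine obstacle: all the analytic work --- the Ganea join computation of \cref{lem:comparison map}, the divisibility input from Bousfield, and the retract argument producing $S^0$ from a nontrivial fundamental group --- has already been carried out in the propositions. The only point requiring a moment's care is the purely logical bookkeeping that ``(a) or (b)'' is the correct combined hypothesis: in the first regime (a) alone forces a cofiber sequence regardless of (b), while in the second regime (a) is vacuous and the entire burden falls on (b). Writing this out is a one-paragraph synthesis rather than a new argument.

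\begin{proof}
We argue according to whether or not $X^{+H\mathbb Z[J^{-1}]}$ is simply connected, as these two regimes are covered by \cref{prop:it is a cofibration} and \cref{prop:it is not a cofibration} respectively.

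Suppose first that $X^{+H\mathbb Z[J^{-1}]}$ is simply connected, so that condition~(a) holds and the disjunction in the statement is satisfied. By \cref{prop:it is a cofibration}, the acyclization fiber sequence is then a cofiber sequence. Hence in this regime both the condition ``(a) or (b)'' and the conclusion ``the sequence is a cofiber sequence'' hold, and the desired equivalence is immediate.

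Suppose now that $X^{+H\mathbb Z[J^{-1}]}$ is not simply connected. Then condition~(a) fails, so the disjunction ``(a) or (b)'' is equivalent to condition~(b) alone, namely that $A_{H\mathbb Z[J^{-1}]} X$ is $H\mathbb Z$-acyclic. By \cref{prop:it is not a cofibration}, under this non-simply-connected hypothesis the acyclization fiber sequence is a cofiber sequence if and only if $A_{H\mathbb Z[J^{-1}]} X$ is $H\mathbb Z$-acyclic, i.e.\ if and only if~(b) holds. Thus in this regime the equivalence of the theorem coincides with that of \cref{prop:it is not a cofibration}.

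Since every connected space $X$ falls into exactly one of these two regimes, combining the two cases yields the claimed equivalence: the acyclization fiber sequence is a cofiber sequence if and only if condition~(a) or condition~(b) holds.
\end{proof}
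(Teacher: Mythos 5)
Your proof is correct and takes exactly the route the paper intends: the theorem is stated immediately after \cref{prop:it is a cofibration} and \cref{prop:it is not a cofibration} precisely as their combination, with the same dichotomy on the simple connectivity of $X^{+H\mathbb Z[J^{-1}]}$. Your careful bookkeeping that ``(a) or (b)'' reduces to a true statement in the first regime and to (b) alone in the second is all that is needed.
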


We conclude this article with an open question.

\begin{remark}
\label{rem:initial and acyclic}
By design, the $H \Z[J^{-1}]$-plus construction is a nullification functor, so the nullification map $X \rightarrow X^{+  \Z[J^{-1}]}$ is initial among all maps to $\mathcal M$-local spaces and terminal among $\mathcal M$-local equivalences out of $X$.

The integral plus construction enjoys yet another universal property. It is initial among all acyclic maps inducing the quotient by the maximal perfect subgroup on the fundamental groups. In our setting, this is no longer true because the nullification functor we have designed is not the most economic way to achieve this. In fact, it could be that the Broto-Levi-Oliver construction from \cite{Broto2021} is closer to enjoying such a universal property as it is somewhat constructed by attaching as few cells as possible to kill the desired subgroup of $\pi_1 X$. We do not know, however, how to formalize this since this construction is not functorial.
\end{remark}
{\sloppy\printbibliography}
\end{document}